\def\H{\mathcal{H}}
\def\U{\mathcal{U}}
\def\N{\mathbb N}
\def\R{\mathbb R}
\def\HH{\mathbf{H}}
\def\Om{\Omega}
\def\de{\delta}
\def\e{\varepsilon}
\def\k{\kappa}
\def\l{\lambda}
\def\om{\omega}
\def\vphi{\varphi}
\def\Lip{{\rm Lip}}
\def\Div{{\rm div}\,}
\def\Id{{\rm Id}\,}
\def\dist{{\rm dist}}
\def\spt{{\rm spt}}
\def\weakstar{\stackrel{*}{\rightharpoonup}}
\def\ov{\overline}
\def\pa{\partial}
\def\cc{\subset\subset}
\def\SS{\mathbb{S}}
\newtheorem{theorem}{Theorem}
\newtheorem{remark}{Remark}[section]
\newtheorem{lemma}[theorem]{Lemma}
\newtheorem{corollary}[theorem]{Corollary}
\def\var{\mathbf{var}}
\def\C{\mathbf{C}}
\def\D{\mathbf{D}}
\numberwithin{equation}{section}
\numberwithin{figure}{section}
\title{Critical points \\ in the Euclidean isoperimetric problem}
\title{Alexandrov's theorem revisited}
\author[Delgadino]{M. G. Delgadino}
\address[Matias Gonzalo Delgadino]{
\newline \indent Department of Mathematics, Imperial College London,
\newline \indent  South Kensington Campus, London SW7 2AZ, UK.}
\email{m.delgadino@imperial.ac.uk}
\author[Maggi]{F. Maggi}
\address[Francesco Maggi]{
\newline \indent Department of Mathematics, University of Texas at Austin,
\newline \indent 2515 Speedway STOP C1200, 78712, Austin, Texas, USA}
\email{maggi@math.utexas.edu}
\begin{document}

\begin{abstract}
{\rm We show that {\it among sets of finite perimeter} balls are the only volume-constrained critical points of the perimeter functional.}
\end{abstract}

\maketitle

\section{Introduction} \subsection{Sets of finite perimeter and the isoperimetric problem} The Euclidean isoperimetric theorem is probably the most basic result in the Calculus of Variations. There are many different proofs of the isoperimetry of balls in different classes of competitors, thus motivating the question: Which is the natural competition class in which the isoperimetric theorem can be formulated? From the perspective of the modern Calculus of Variations, the answer is found by looking at the relaxation of the perimeter functional. Following the seminal work of De Giorgi \cite{DeGiorgiSOFP1,DeGiorgiSOFP2} we consider as particulary natural his formulation of the Euclidean isoperimetric problem in the class of sets of finite perimeter. The characterization of Euclidean balls as the only isoperimetric sets among sets of finite perimeter was achieved by De Giorgi in \cite{DeGiorgi58ISOP}. By using the compactness properties of sets of finite perimeter, De Giorgi shows the existence of global minimizers (isoperimetric sets). Next, he shows that distributional perimeter is decreased under Steiner symmetrization, thus deducing that Steiner symmetrization applied to an isoperimetric set leads to an equality case in the Steiner perimeter inequality. He finally derives some necessary conditions for being an equality case in the Steiner perimeter inequality, so to deduce the sphericity of isoperimetric sets.

Despite the intimate connection between sets of finite perimeter and the isoperimetric problem, a characterization of balls as the only {\it critical points} in the isoperimetric problem {\it among sets of finite perimeter} is currently missing. The main result of this paper is showing the validity of this characterization.

The problem is already subtle in the case of local minimizers. By a local minimizer we mean a set of finite perimeter which minimizes perimeter among variations compactly supported in a fixed neighborhood of its own boundary. In particular, local minimality does not allow for perimeter comparison with sets obtained by symmetrization, thus ruling out the use of De Giorgi's original argument. In Euclidean spaces of dimension less or equal to $7$ the problem can be settled by the means of the regularity theory for local perimeter minimizers. In fact, in these dimensions any local minimizer is a bounded smooth set with constant mean curvature. One can then combine the strong maximum principle with the geometric construction known as the moving planes method (Alexandrov's theorem \cite{alexandrov}) to deduce the sphericity of the boundary.
But this strategy fails in dimension $8$ or larger, as boundaries of local perimeter minimizers could have, in principle, singular points, where local graphicality fails \cite{simons}. Actually, it has been recently shown that local volume-constrained perimeter minimizers in non-convex perturbations of the unit ball may indeed have singularities \cite{stezum}.

The problem is open in every dimension for critical points, that is, sets of finite perimeter and finite volume such that the first variation of perimeter under volume-fixing flows vanishes. These sets have constant mean curvature in a very natural (distributional) sense. However, at variance with the case of local minimizers, there seems to be no obvious way, even in low dimensions, to exploit regularity theorems and the moving planes method to conclude their sphericity.

Here we approach this problem by combining regularity theorems and maximum principles with various geometric constructions inspired by the proof of Alexandrov's theorem by Montiel and Ros \cite{montielros}. We thus extend De Giorgi's isoperimetric theorem from the case of global minimizers to that of critical points in the isoperimetric problem.

\begin{theorem}
  \label{thm main}
  Among sets of finite perimeter and finite volume, finite unions of balls with equal radii are the unique critical points of the Euclidean isoperimetric problem.
\end{theorem}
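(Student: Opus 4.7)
The plan is to extend the Montiel--Ros proof of Alexandrov's theorem to the rough setting of sets of finite perimeter. That argument couples two integral identities valid for smooth bounded CMC sets in $\R^n$: the Heintze--Karcher inequality $n H_0\,|E| \le (n-1)\,P(E)$, and the Minkowski identity $n H_0\,|E| = (n-1)\,P(E)$; together they force equality in Heintze--Karcher, which makes $\partial E$ totally umbilic and hence a union of equal round spheres. My task is to produce both identities, together with their rigidity, for an arbitrary critical point $E$, despite the possible presence of a singular set in $\partial E$.

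The first step is to invoke regularity. A critical point $E$ satisfies a distributional CMC equation for some constant $H_0\in\R$; by the regularity theorems of Gonzalez--Massari--Tamanini and Tamanini, $\partial^*E$ is an analytic hypersurface with classical mean curvature $H_0$, and $\Sigma(E):=\partial E\setminus\partial^*E$ is closed with $\H^s(\Sigma(E))=0$ for every $s>n-8$. Density estimates and $|E|<\infty$ force a bounded representative, so $H_0>0$ with the outward-normal convention. The Minkowski identity then comes from testing the Euler--Lagrange equation with position-like vector fields $x\,\eta(x)$, where $\eta$ is a cut-off vanishing near $\Sigma(E)$, and passing to the limit using $\dim_\H\Sigma(E)\le n-8<n-1$ together with the Gauss--Green formula for sets of finite perimeter.

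For the Heintze--Karcher inequality I would introduce the inward normal map $\Phi(x,t):=x-t\,\nu_E(x)$ on $\partial^*E\times[0,\infty)$, and for each $x\in\partial^*E$ let $t_c(x)$ be the maximal time for which $\Phi(x,\cdot)$ remains a local diffeomorphism into the set of density-one points of $E$. The Jacobian is $J\Phi(x,t)=\prod_{i=1}^{n-1}(1-t\kappa_i(x))$, bounded by $(1-tH_0/(n-1))^{n-1}$ via AM--GM for $0\le t\le\min(t_c(x),(n-1)/H_0)$. If one can show that the image of $\Phi$ on $\{t\le t_c(x)\}$ covers $E$ up to a Lebesgue null set, integration yields
\[
|E|\;\le\;\int_{\partial^*E}\int_0^{(n-1)/H_0}\bigl(1-tH_0/(n-1)\bigr)^{n-1}\,dt\,d\H^{n-1}\;=\;\frac{n-1}{nH_0}\,P(E).
\]
Combining with the Minkowski identity then forces AM--GM equality $\H^{n-1}$-a.e.\ on $\partial^*E$; by analyticity of the principal curvatures, $\partial^*E$ is in fact pointwise umbilic, so each of its connected components is a round sphere of radius $(n-1)/H_0$. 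Since such a sphere is already a closed analytic hypersurface bounding an open ball, $\Sigma(E)=\emptyset$ and $E$ is a finite disjoint union of balls of common radius.

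The hardest step is undoubtedly the rigorous Heintze--Karcher inequality of the third paragraph: I need to show that the inward normal flow emanating from $\partial^*E$ sweeps $E$ up to a null set, even though individual trajectories may in principle collide with $\Sigma(E)$ or develop an early focal time. The only a priori information available is the codimension estimate $\dim_\H\Sigma(E)\le n-8$, and turning this into a full-measure covering of $E$ by bona fide flow lines is precisely where the geometric constructions ``inspired by Montiel--Ros'' must be introduced; a secondary difficulty is verifying that the resulting $\H^{n-1}$-a.e.\ umbilicity of $\partial^*E$ propagates, via analyticity and the CMC equation, to genuine umbilicity of every component.
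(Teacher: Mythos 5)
Your overall strategy (Montiel--Ros plus Minkowski identity, normal map, AM--GM rigidity) is the same as the paper's, but the proof has a genuine gap, and it starts with the regularity input. The codimension estimate $\dim_{\H}\Sigma(E)\le n-8$ from Gonzalez--Massari--Tamanini type theorems is a theorem about volume-constrained (almost-)\emph{minimizers}; a mere critical point is not almost minimizing, and the estimate is in fact \emph{false} in this class: a union of two mutually tangent balls is a critical point in every dimension, and the tangency point is a singular point of density $2$. For critical points the only available regularity is Allard's theorem: the set of density-$\omega_{n-1}$ points of $\partial E$ is an analytic CMC hypersurface $\partial^*E$, and the complementary singular set $\Sigma$ is merely $\H^{n-1}$-null, with blow-ups that are stationary cones (possibly planes with multiplicity $\ge 2$), not area-minimizing ones. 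Consequently the tool you planned to use for the crucial covering step --- that the inward normal flow from $\partial^*E$ sweeps out $E$ up to a null set --- simply is not there, and you acknowledge but do not fill this step. This is precisely where the bulk of the paper's proof lives: it introduces the level sets $\Om_s$ of the distance function and the subsets $\Gamma_s^t$ of points lying on a segment from $\pa\Om$ to $\pa\Om_t$, proves their $C^{1,1}$-rectifiability with two-sided curvature bounds, changes variables along the normal direction, and then rules out a positive $\H^{n-1}$-measure of points projecting onto $\Sigma$ by a contradiction argument based on Sch\"atzle's strong maximum principle for integral varifolds (needed exactly because, on the bad set, the Jacobian of the projection degenerates, so $\H^{n-1}(\Sigma)=0$ alone gives nothing via the area formula). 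No dimension bound on $\Sigma$, even if true, would substitute for this argument by itself.

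There is a second, smaller gap at the end. Equality in AM--GM gives umbilicity of $\partial^*E$, but $\partial^*E$ is only a relatively open subset of $\partial E$: umbilicity yields that it is a countable union of open \emph{pieces} of spheres of the same radius, whose relative boundaries may sit inside a nonempty $\Sigma$, so you cannot conclude ``each component is a round sphere, hence $\Sigma(E)=\emptyset$'' directly. The paper closes this by exploiting the other equality conditions from the Montiel--Ros computation ($|\zeta(Z)\setminus\Om|=0$ and $\H^0(\zeta^{-1}(y))=1$ for a.e.\ $y$) to show each spherical piece is centered at distance exactly one (after normalization) from $\pa\Om$, and then applies the strong maximum principle once more at points of the closure of each piece to conclude that every piece is a complete sphere, with finiteness following from finite perimeter. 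So the skeleton of your argument is right, but both the covering of $E$ by normal trajectories from $\partial^*E$ and the passage from a.e.\ umbilicity to full spheres require the varifold maximum principle machinery, and the regularity hypothesis you imported from the minimizing theory must be abandoned.
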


\begin{remark}
  {\rm Theorem \ref{thm main} is stated in terms of finite unions of balls. By assuming indecomposability (the measure-theoretic analogous of connectedness) of our critical points, we can change ``finite unions of balls'' with ``a single ball''. However, it seems natural to consider finite unions of mutually tangent balls as genuinely distinct critical points of the perimeter functional. Indeed, as proved in \cite{ciraolomaggi2017,delgadinomaggimihailaneumayer} (and as it has been known for a much longer time in the case of parameterized surfaces \cite{breziscoron84,struwe84}), finite unions of mutually tangent balls are the unique limits of sequences of bounded connected smooth sets with bounded perimeters and scalar mean curvatures which converge to a constant. In short, finite union of mutually tangent balls are the limit points of Palais-Smale sequences for the isoperimetric problem among connected open sets with smooth boundary.}
\end{remark}

\begin{remark}\label{remark guido}
  {\rm Wente's torus \cite{wentetorus} provides an example of an integer rectifiable varifold with multiplicity one in $\R^3$ which has constant distributional mean curvature and is not a sphere. Clearly, Wente's torus is not the boundary of a set of finite perimeter. From this point of view, Theorem \ref{thm main} seems to identify the most general family of surfaces such that constant distributional mean curvature implies sphericity.}
\end{remark}

While uniqueness and symmetry results for global minimizers can be obtained by a wealth of methods (symmetrization, mass transportation, etc), the methods employed in the case of critical points/solutions to geometric PDEs, that we are aware of, require a sufficient degree of smoothness (e.g. the classical Alexandrov theorem \cite{alexandrov}). Addressing this kind of issue without assuming smoothness seems a novel aspect of Theorem \ref{thm main}. This point could be particularly useful in proving convergence of geometric flows to union of balls. Indeed, without strong assumptions like convexity or star-shapedness, global-in-time existence results for geometric flows holds only in weak (either distributional or viscous) sense. Corollary \ref{corollary main} below should be useful in this context. To better illustrate this point, and to state the corollary itself, we introduce some terminology. In Theorem \ref{thm main} we  consider Borel sets $\Om$ in $\R^{n+1}$ with the following properties:

\medskip

\noindent {\it (i) Finite perimeter}: There exist a Borel set $\pa^*\Om$ which is covered, up to an $\H^n$-negligible set, by countably many graphs of $C^1$ functions from $\R^n$ to $\R^{n+1}$, and a Borel vector field $\nu_\Om:\pa^*\Om\to\SS^n$, such that
a generalized version of the divergence theorem holds
\begin{equation}
  \label{1}
  \int_\Om\,\Div X=\int_{\pa^*\Om}X\cdot\nu_\Om\,d\H^n\qquad\forall X\in C^1_c(\R^{n+1};\R^{n+1})\,.
\end{equation}
Here $\H^n$ denotes the $n$-dimensional Hausdorff measure on $\R^{n+1}$.

\medskip

\noindent {\it (ii) Constant distributional mean curvature}: There exists $\l\in\R$ such that
\begin{equation}
  \label{2}
  \int_{\pa^*\Om}\Div^{\pa^*\Om}X\,d\H^n=\l\,\int_{\pa^*\Om}X\cdot\nu_\Om\,d\H^n\qquad \forall X\in C^1_c(\R^{n+1};\R^{n+1})\,.
\end{equation}
Here $\Div^{\pa^*\Om}X=\Div X-\nu_\Om\cdot (\nabla X)[\nu_\Om]$ is the tangential divergence of $X$ along $\pa^*\Om$. Condition \eqref{2} is equivalent to asking that $\Om$ is a {\it critical point in the Euclidean isoperimetric problem}, that is
\begin{equation}
  \label{critical point x}
  \frac{d}{dt}\Big|_{t=0}P(f_t(\Om))=0
\end{equation}
whenever $\{f_t\}_{|t|<1}$ is a volume-preserving variation of $\Om$. Namely, each $f_t$ is a diffeomorphism with $f_t=\Id$ outside of a compact set, $f_0\equiv\Id$, and $|f_t(\Om)|=|\Om|$ for every $|t|<1$, where $|\Om|$ denotes the Lebesgue measure, or volume, of $\Om$. When $\Om$ is an open bounded set with $C^2$-boundary, as in Alexandrov theorem, one simply has $\pa^*\Om=\pa\Om$ and \eqref{critical point x} is equivalent to asking that $\pa\Om$ has constant mean curvature.

\medskip

With this terminology in place, we can state the following corollary of Theorem \ref{thm main}.

\begin{corollary}\label{corollary main}
  If $\{\Om_j\}_{j\in\N}$ and $\Om$ are sets of finite perimeter in $\R^{n+1}$ such that
  \begin{equation}
    \label{cor hp 1}
      \lim_{j\to\infty}|\Om_j\Delta\Om|=0\qquad\lim_{j\to\infty}P(\Om_j)=P(\Om)\,,
  \end{equation}
  and if the distributional mean curvatures of the $\Om_j$ converge to a constant $\l\in\R$, i.e.
  \begin{equation}
    \label{cor hp 2}
      \lim_{j\to\infty}\int_{\pa^*\Om_j}\big(\Div^{\pa^*\Om_j}\,X-\l\,X\cdot\nu_{\Om_j}\big)\,d\H^n=0\qquad \forall X\in C^1_c(\R^{n+1};\R^{n+1})\,,
  \end{equation}
  then $\l=n\,P(\Om)/(n+1)\,|\Om|$ and $\Om$ is a finite union of balls of radius $n/\l$.
\end{corollary}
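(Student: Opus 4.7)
The plan is to pass to the limit in \eqref{cor hp 2} so that $\Om$ itself satisfies \eqref{2} with constant $\l$, and then invoke Theorem~\ref{thm main} to identify $\Om$ as a finite union of equal balls. The numerical identities $\l = nP(\Om)/((n+1)|\Om|)$ and $r=n/\l$ then follow from the classical perimeter-to-volume ratio of a ball.

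For the passage to the limit, the key tool is Reshetnyak's continuity theorem. The hypothesis $|\Om_j\Delta\Om|\to0$, together with the uniform bound on $P(\Om_j)$, gives weak-$*$ convergence of the vector-valued Gauss--Green measures $\nu_{\Om_j}\H^n\llcorner\pa^*\Om_j \weakstar \nu_\Om\H^n\llcorner\pa^*\Om$, while the convergence $P(\Om_j)\to P(\Om)$ upgrades this to strict convergence of their total variations. Reshetnyak's theorem then yields, for every continuous $f:\R^{n+1}\times\SS^n\to\R$ with compact $x$-support,
\begin{equation*}
\int_{\pa^*\Om_j} f(x,\nu_{\Om_j})\,d\H^n \ \longrightarrow \ \int_{\pa^*\Om} f(x,\nu_\Om)\,d\H^n.
\end{equation*}
Applying this with $f(x,\nu) = \Div X(x) - \nu\cdot(\nabla X(x))[\nu] - \l\,X(x)\cdot\nu$, which is compactly supported in $x$ and polynomial (hence continuous) in $\nu\in\SS^n$, and combining with \eqref{cor hp 2}, one concludes that $\Om$ verifies \eqref{2} with the given $\l$.

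Next, $|\Om_j|\to|\Om|$ by $L^1$ convergence, and the Euclidean isoperimetric inequality bounds $|\Om_j|$ in terms of $P(\Om_j)$, so $|\Om|<\infty$. Theorem~\ref{thm main} then identifies $\Om$ as a finite disjoint union of balls of common radius $r$. Each such ball has mean curvature $n/r$, forcing $\l=n/r$. Since any ball of radius $r$ satisfies $P = (n+1)|\cdot|/r$, the same ratio holds for the finite union, yielding $P(\Om)=(n+1)|\Om|\l/n$, which is the asserted formula for $\l$.

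The main obstacle is the Reshetnyak step: controlling the nonlinearly-in-$\nu$ tangential-divergence integrand when passing to the limit. The strict improvement $P(\Om_j)\to P(\Om)$ is essential here; with only weak-$*$ convergence of the Gauss--Green measures, the quadratic term $\nu_{\Om_j}\cdot(\nabla X)[\nu_{\Om_j}]$ could undergo cancellations absent in the limit (for instance via collapsing oscillating folds), and the constant-mean-curvature identity need not survive the limit.
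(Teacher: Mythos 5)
Your proposal is correct and follows essentially the same route as the paper: strict convergence of the Gauss--Green measures from \eqref{cor hp 1}, Reshetnyak's continuity theorem to pass the tangential-divergence term to the limit, and then Theorem \ref{thm main} to identify $\Om$ and the value of $\l$. The only cosmetic difference is that you absorb the linear term $\l\,X\cdot\nu$ into the Reshetnyak test function, whereas the paper handles it separately via weak-$*$ convergence of $\mu_{\Om_j}$; both are equally valid.
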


\begin{remark}
  {\rm Notice that \eqref{cor hp 2} holds whenever each $\Om_j$ has distributional mean curvature $H_{\Om_j}\in L^p(\H^n\llcorner\pa^*\Om_j)$ for some $p\ge 1$ (see \eqref{HH def} and \eqref{HOmega} below) and
\begin{equation}
  \label{hp mcf}
  \lim_{j\to\infty}\int_{\pa^*\Om_j}|H_{\Om_j}-\l|^p\,d\H^n=0\,.
\end{equation}}
\end{remark}

\begin{remark}
  {\rm Global-in-time weak solutions of the volume-preserving mean curvature flow have been constructed in \cite{mugnaiseisspadaro} following the method proposed by Almgren, Taylor and Wang \cite{AlmgrenTW} and Luckhaus and Sturzenhecker \cite{luckhaussturzen}. Considering \cite[Theorem 2.3.2]{mugnaiseisspadaro} and Corollary \ref{cor hp 2}, it seems reasonable to conjecture that, for a large class of initial data and along time subsequences $t_j\to\infty$, the evolution $\{\Om(t):t\ge0\}$ should converge to finite union of balls. This is indeed the case, with a single ball as the limit for $t\to\infty$, when the initial data is uniformly smooth and convex, as proved in a classical theorem of Huisken \cite{huisken_vpmcf}. As geometric evolutions unavoidably produce singularities, Theorem \ref{thm main} could turn out to be a fundamental ingredient in attacking such questions.}
\end{remark}

\subsection{The Montiel-Ros argument}\label{sec Montiel Ros smooth argument} Our starting point is the beautiful proof of Alexandrov's theorem by Montiel and Ros \cite{montielros}, which we now recall. Assume that $\Om$ is a bounded open set with smooth boundary and positive mean curvature $H_\Om$ with respect to its outer unit normal $\nu_\Om$. Denote by $\{\k_i\}_{i=1}^n$ the principal curvatures of $\pa\Om$, indexed in increasing order so that $\k_n\ge H_\Om/n>0$, set $u(y)=\dist(y,\pa\Om)$ for each $y\in\Om$, and define
\begin{eqnarray}
  \label{psi smooth}
  &&Z=\Big\{(x,t)\in\pa\Om\times\R:0<t\le\frac1{\k_n(x)}\Big\}\,,
  \\\nonumber
  &&\zeta(x,t)=x-t\,\nu_\Om(x)\qquad (x,t)\in{Z}\,.
\end{eqnarray}
Let us denote by $B_\rho(x)$ the Euclidean ball in $\R^{n+1}$ with center at $x$ and radius $\rho$. If $y\in\Om$, then $B_{u(y)}(y)$ touches $\Om$ from inside at a point $x\in\pa\Om$, where $\k_n(x)\ge1/u(y)$, i.e. $u(y)\le1/\k_n(x)$. In particular, \begin{equation}
  \label{inclusion smooth}
  \Om\subset \zeta(Z)
\end{equation}
and by the area formula, with $J^{Z}\zeta$ denoting the tangential Jacobian of $\zeta$ along ${Z}$,
\begin{eqnarray*}
|\Om|&\le&|\zeta(Z)|\le\int_{\zeta({Z})}\H^0(\zeta^{-1}(y))\,dy=\int_{Z} J^{Z} \zeta\,d\H^{n+1}
\\
&=&\int_{\pa\Om}d\H^n_x\int_0^{1/\k_n(x)}\prod_{i=1}^n(1-t\,\k_i(x))\,dt\,.
\end{eqnarray*}
By the arithmetic-geometric mean inequality and by $\k_n\ge H_\Om/n$,
\begin{eqnarray}\label{same as}
|\Om|&\le&\int_{\pa\Om}d\H^n_x\int_0^{1/\k_n(x)}\Big(\frac1n\sum_{i=1}^n(1-t\,\k_i(x)\Big)^n\,dt
\\\nonumber
&\le&\int_{\pa\Om}d\H^n_x\int_0^{n/H_\Om(x)}\Big(1-t\,\frac{H_\Om(x)}n\Big)^n\,dt
=\frac{n}{n+1}\,\int_{\pa\Om}\frac{d\H^n}{H_\Om}\,,
\end{eqnarray}
so that we have proved the {\it Heintze-Karcher inequality}
\begin{equation}
  \label{hkinq}
  |\Om|\le\frac{n}{n+1}\,\int_{\pa\Om}\frac{d\H^n}{H_\Om}\,.
\end{equation}
If $H_\Om$ is constantly equal to some $\l\in\R$, then, by combining the divergence theorems \eqref{1} and \eqref{2} (see \eqref{H0 computation} below), we find $\l=n\,\H^n(\pa\Om)/(n+1)|\Om|$ . Hence equality holds throughout the argument, $\pa\Om$ is umbilical, and thus a sphere. In this way the Montiel-Ros argument provides a very effective proof of Alexandrov's theorem.

\subsection{The Montiel-Ros argument revisited} As the Montiel-Ros argument heavily relies on the smoothness of $\pa\Om$, it does not seem obvious how to adapt it to the case when $\Om$ is a set with finite volume, finite perimeter and constant distributional mean curvature.

From the point of view of regularity of $\pa\Om$, the starting point is given by the regularity theory of Allard \cite{Allard} (see \cite{SimonLN,DeLellisNOTES}). Up to modifying $\Om$ on a set of volume zero, we can assume that $\Om$ is open and that its topological boundary $\pa\Om$  can be split into a closed subset $\Sigma$ with $\H^n(\Sigma)=0$, and a relatively open subset $\pa^*\Om=\pa\Om\setminus\Sigma$ which is locally an analytic constant mean curvature hypersurface, characterized by the property that for every $x\in\pa\Om$
\[
x\in\pa^*\Om\qquad\mbox{if and only if}\qquad \lim_{\rho\to 0^+}\frac{\H^n(B_\rho(x)\cap\pa\Om)}{\rho^n}=\om_n\,,
\]
where $\om_n$ is the volume of the unit ball in $\R^n$. It is thus natural to redefine ${Z}$ by replacing $\pa\Om$ with $\pa^*\Om$ in \eqref{psi smooth}, i.e.
\begin{eqnarray}
  \label{psi non smooth}
  &&{Z}=\Big\{(x,t)\in\pa^*\Om\times\R:0<t\le\frac1{\k_n(x)}\Big\}\,,
\end{eqnarray}
where it is still true that the largest principal curvature $\k_n$ is positive along $\pa^*\Om$.

Given this choice of ${Z}$, in order to obtain \eqref{inclusion smooth} we would need to show that, for every $y\in\Om$, $B_{u(y)}(y)$ is touching $\pa\Om$ at a point $x\in\pa^*\Om$. This is not obvious as we just know that $\Sigma=\pa\Om\setminus\pa^*\Om$ is $\H^n$-negligible. Actually, this is false for an arbitrary point $y\in\Om$: this is the case when $\Om$ is a union of two mutually tangent balls, $x$ is a tangency point between two balls, and $y$ is any point between $x$ and the center of one of the balls. A cheap argument (see Lemma \ref{lemma cone stationary varifold}) show that at each touching point $x$, $\pa\Om$ blows-up an hyperplane {\it with integer multiplicity possibly larger than $1$}. So, near a touching point $x$, $\pa\Om$ consists of finitely many sheets that are mutually tangent at $x$. The union of these sheets has constant mean curvature in the distributional sense defined by \eqref{2}, although it is not immediate to extract information on the mean curvature each separate sheet. A deep result of Sch\"atzle \cite{schatzle} implies that the lower and upper sheets (with respect to any given direction) satisfy a measure-theoretic version of the strong maximum principle. This is a crucial information, which is delicate to exploit, but fundamental to our argument.

We now describe our argument by referring to the main steps of the proof of Theorem \ref{thm main}, which is contained in detail in section \ref{section criticla points}. We start by identifying a large subset $\Om^\star$ of good points of $\Om$, meaning that
\begin{equation}
  \label{inclusion non smooth}
  |\Om^\star\setminus\zeta({Z})|=0\,,\qquad |\Om\setminus\Om^\star|=0\,.
\end{equation}
In other words, the projection of almost every point in $\Om^\star$ onto $\pa\Om$ is contained in $\pa^*\Om$, and $\Om^\star$ is equivalent to $\Om$. The definition of $\Om^\star$ is as follows. First, for every $s>0$, we set
\begin{equation}
  \label{oms}
  \Om_s=\big\{y\in\Om:u(y)>s\big\} \qquad\pa\Om_s=\big\{y\in\Om:u(y)=s\big\}\,.
\end{equation}
Clearly $\Om_s$ satisfies an exterior ball condition of radius $s$ at each point of $\pa\Om_s$, but otherwise, $\Om_s$ is just a set of finite perimeter (for a.e. $s>0$). We can also obtain an interior ball condition, restricting ourselves to the following subset. Setting $t>s>0$, we define
\begin{equation}\label{gammst}
  \Gamma_s^t=\Big\{y\in\pa\Om_s:y=\Big(1-\frac{s}t\Big)\,x+\frac{s}{t}\,z\quad\mbox{for some $z\in\pa\Om_t$, $x\in\pa\Om$}\Big\}\,.
\end{equation}
Notice that $\Gamma_s^t$ is just a compact subset of $\pa\Om_s$, which could be very porous inside $\pa\Om_s$. Some technical effort (see step one)  is put in showing that $\Gamma_s^t$ can be covered by countably many $C^{1,1}$-images of $\R^n$ into $\R^{n+1}$, and that $\nabla u$ is tangentially differentiable along $\Gamma_s^t$ (with bounds on the tangential derivatives corresponding to the exterior/interior ball conditions).
Once these technical aspects are settled, we are allowed to use $\Id-r\,\nabla u$ to change variables between $\Gamma_{s}^t$ and $\Gamma_{s-r}^t$ and we can prove that $|\Om\setminus\Om^\star|=0$, where $\Om^\star$ is defined by
\begin{eqnarray}\label{omegastar}
\Gamma_s^+=\bigcup_{t>s}\Gamma_s^t\,,
\qquad
\Om^\star=\bigcup_{s>0}\Gamma_s^+\,.
\end{eqnarray}
This is done in step two of the proof.

Showing that $|\Om^\star\setminus\zeta({Z})|=0$, see step three and four, is considerably more delicate. We have to exclude that the points in a given $\Gamma_s^t$ that are projected into the singular set $\Sigma=\pa\Om\setminus\pa^*\Om$ have positive $\H^n$-measure, in other words, we want
\[
\H^n\big((\Id-s\,\nabla u)^{-1}(\Sigma)\cap\Gamma_s^t\big)=0\,.
\]
This may seem obvious, as $\Id-s\,\nabla u$ is almost injective on $\Gamma_s^t$ (see \eqref{h00}) and it is Lipschitz on each piece of a countable decomposition of $\Gamma_s^t$ (see \eqref{lip bound j}), while at the same time $\H^n(\Sigma)=0$. However we cannot derive a straightforward contradiction from the area formula, as the tangential Jacobian of $\Id-s\,\nabla u$ along $\Gamma_s^t$ may be zero $\H^n$-a.e. In fact, this is the information that we obtain from the area formula, namely, the least principal curvature of $\Gamma_s^t$ is equal to $-1/s$ along points in $(\Id-s\,\nabla u)^{-1}(\Sigma)\cap\Gamma_s^t$. Heuristically, this curvature for $\Gamma_s^t$ can only be obtained when $\partial\Omega$ has a inward corner, which is ruled out by absolute continuity of the mean curvature. Following this guiding example, we change variable to show that the least principal curvature of $\Gamma_{s-r}^t$ at corresponding points is thus as negative as we wish. This indicates that $\pa\Om_{s-r}$ has negative mean curvature on a set of positive $\H^n$-measure for any $r$ close enough to $s$. By the almost everywhere second order differentiability of $u$, swiping $r$ over an interval we can find a paraboloid with negative mean curvature, locally contained inside $\pa\Om_{s-r}$. By translating this object until it touches $\pa\Om$ (at $\Sigma$) we can apply Sch\"atzle maximum principle and derive a contradiction.

Having proved \eqref{inclusion non smooth} or \eqref{inclusion smooth}, we are ready to argue as Montiel and Ros. We thus find, from the equality case in their argument, that
\begin{eqnarray}
    \label{condition1 intro}
    \Big|\zeta({Z})\setminus\Om\Big|=0\,,&&
    \\
    \label{condition2 intro}
    \H^0(\zeta^{-1}(y))=1\,,&&\qquad\mbox{for a.e. $y\in\Om$}\,,
    \\
    \label{condition3 intro}
    \k_i(x)=\frac{H_\Om}n\,,&&\qquad\mbox{for every $x\in\pa^*\Om$, $i=1,...,n$}\,.
\end{eqnarray}
Condition \eqref{condition3 intro} implies that $\pa^*\Om$ is umbilical, in addition to being constant mean curvature. In particular, $\pa^*\Om$ consists of at most countably many open pieces of spheres with same curvature. Should these pieces be finitely many, one could conclude from the distributional constant mean curvature condition, in a rather direct way, that each piece is equal to a complete sphere. But as the number of the pieces could indeed be infinite, the pieces may have smaller and smaller areas and combine themselves in particular ways to achieve constant distributional mean curvature, creating at the same time a large singular set $\pa\Om\setminus\pa^*\Om$. To rule out this possibility, we exploit the information contained in \eqref{condition1 intro} and \eqref{condition2 intro} through a geometric argument.

\medskip

We conclude with two remarks. First, as a by-product of this analysis, we obtain an {\it Heintze-Karcher inequality for sets of finite perimeter} which are mean convex in a viscous sense, see Theorem \ref{thm viscous hk} below. This result is actually not needed to prove Theorem \ref{thm main}, but it is included as it may be considered of independent interest. Second, as recently shown by Brendle \cite{brendle}, the Montiel-Ros approach to Alexandrov's theorem is quite flexible, as it allows to show that constant mean curvature implies umbilicality in many warped product manifolds of physical and geometric interest. The methods of this paper should be naturally adaptable to these more general contexts. In this direction, in a companion paper \cite{delgadinomaggi_wulff_alex}, we prove that Wulff shapes are the only volume-constrained {\it local minimizers} of smooth uniformly elliptic surface tension energies. Of course the assumption of local minimality is considerably stronger than criticality.

\subsection{Organization of the paper} The paper is organized as follows. In section \ref{section gmt} we gather some background material from Geometric Measure Theory. In section \ref{section criticla points} we prove Theorem \ref{thm main} and Corollary \ref{corollary main}. The generalized Heintze-Karcher inequality for sets of finite perimeter is stated and proved in section \ref{section hk for sofp}.

\bigskip

\noindent{\bf Acknowledgments.} We thank Guido De Philippis and Massimiliano Morini for pointing out to us, respectively, the references \cite{wentetorus} and \cite{mugnaiseisspadaro}. Part of this work was completed while both authors were affiliated to the Abdus Salam International Centre for Theoretical Physics in Trieste, Italy. This work was supported by the NSF Grants DMS-1565354, DMS-1361122 and DMS-1262411. This work was also supported by the EPSRC under grant No. EP/P031587.

\section{Background material from Geometric Measure Theory}\label{section gmt} In this section we review some preliminaries from the theory of rectifiable sets (section \ref{section rectifiable sets}), rectifiable varifolds (section \ref{section varifold}) and sets of finite perimeter (section \ref{section sofp}). We refer to \cite{SimonLN,AFP,maggiBOOK,EvansGariepyBOOK} for detailed accounts. Finally, in section \ref{section critical points account}, we discuss some basic properties of volume-constrained critical points of the perimeter functional.

\subsection{Rectifiable sets}\label{section rectifiable sets} Denote by $\H^n$ the Hausdorff measure on $\R^{n+1}$. A Borel set $M\subset\R^{n+1}$ is a {\it locally $\H^n$-rectifiable set} if $M$ can be covered, up to a $\H^n$-negligible set, by countably many Lipschitz images of $\R^n$ into $\R^{n+1}$, and if $\H^n\llcorner M$ is locally finite on $\R^{n+1}$. We say that $M$ is {\it $\H^n$-rectifiable} if in addition $\H^n(M)<\infty$, and that $M$ is {\it normalized} if $M=\spt\,\H^n\llcorner M$, i.e.
\[
x\in M\qquad\mbox{if and only if}\qquad \H^n(B_\rho(x)\cap M)>0\qquad\forall\rho>0\,.
\]
Basic properties of rectifiable sets needed in the sequel are: (i) For $\H^n$-a.e. $x\in M$ there exists $T_xM\in G(n,n+1)$ (the space of $n$-dimensional planes in $\R^{n+1}$), such that
\begin{equation}
  \label{approximate tangent plane}
  \lim_{\rho\to 0^+}\int_{(M-x)/\rho}\vphi\,d\H^n=\int_{T_xM}\vphi\,d\H^n\qquad\forall\vphi\in C^0_c(\R^{n+1})\,,
\end{equation}
see \cite[Theorem 10.2]{maggiBOOK}. The plane $T_xM$ is called the {\it approximate tangent plane to $M$ at $x$}; (ii) If $M_1$ and $M_2$ are locally $\H^n$-rectifiable sets, then
\begin{equation}
  \label{locality tangent plane}
  T_x M_1=T_xM_2\qquad\mbox{$\H^n$-a.e. on $M_1\cap M_2$}\,,
\end{equation}
see \cite[Proposition 10.5]{maggiBOOK}; (iii) Lipschitz functions are differentiable along approximate tangent planes, that is, if $f:\R^{n+1}\to\R^{n+1}$ is a Lipschitz function, then, for $\H^n$-a.e. $x\in M$ such that $T_xM$ exists, the restriction of $f$ to $x+T_xM$ is differentiable at $x$, and the limit
\[
(\nabla^Mf)_x[\tau]=\lim_{h\to 0^+}\frac{f(x+h\tau)-f(x)}{h}\qquad\forall \tau\in T_xM\,,
\]
defines {\it tangential gradient $\nabla^Mf(x)=(\nabla^Mf)_x$ of $f$ along $M$ at $x$}; see \cite[Theorem 11.4]{maggiBOOK}; (iv) The tangential gradient just depends on the restriction of $f$ to $M$. In other words, if $f:M\to\R^{n+1}$ is a Lipschitz function, and $F,G:\R^{n+1}\to\R$ are Lipschitz functions such that $F=G=f$ on $M$, then
\begin{equation}
  \label{locality diff}
  \nabla^MF=\nabla^MG\qquad\mbox{$\H^n$-a.e. on $M$}\,.
\end{equation}
(v) Finally, given a Lipschitz function $f:M\to\R^{n+1}$, the {\it tangential Jacobian of $f$ along $M$} is defined at $\H^n$-a.e. $x\in M$ by
\[
J^Mf(x)=\sqrt{\det(\nabla^Mf(x)^*\nabla^Mf(x))}=\Big|\bigwedge_{i=1}^n (\nabla^Mf)_x[\tau_i(x)]\Big|
\]
provided $\{\tau_i(x)\}_{i=1}^n$ is an orthonormal basis of $T_xM$, and the area formula
\begin{equation}
  \label{area formula}
  \int_{f(M)}\H^0(f^{-1}(y))\,d\H^n_y=\int_M\,J^Mf(x)\,d\H^n_x
\end{equation}
holds \cite[Theorem 11.6]{maggiBOOK}.

For the lack of precise reference we justify property (iv). If $\psi:\R^n\to\R^{n+1}$ is a Lipschitz map and $E\subset\R^n$ is a Borel set, then by \cite[Lemma 10.4, Lemma 11.5]{maggiBOOK} we have $T_xM=(\nabla\psi)_{\psi^{-1}(x)}[\R^n]$ for $\H^n$-a.e. $x\in M\cap\psi(E)$, with
\begin{equation}
  \label{comp}
  (\nabla^MF)_x[\tau]=\nabla(F\circ\psi)_{\psi^{-1}(x)}\big[(\nabla\psi)^{-1}_{x}[\tau]\big]\qquad\forall \tau\in  T_xM\,.
\end{equation}
Since $F=G$ on $M$ implies $\nabla (F\circ\psi)=\nabla (G\circ \psi)$ $\H^n$-a.e. on $E\cap\psi^{-1}(M)$ \cite[Lemma 7.6]{maggiBOOK} we deduce \eqref{locality diff} from \eqref{comp}.

\subsection{Integer rectifiable varifolds}\label{section varifold} If $M$ is a $C^2$-hypersurface without boundary in $\R^{n+1}$, then the mean curvature vector $\HH_M\in C^0(M;\R^{n+1})$ of $M$ is such that
\begin{equation}
  \label{tan dt}
  \int_M\,\Div^M\,X\,d\H^n=\int_M\,\HH_M\cdot X\,d\H^n\,,\qquad\forall X\in C^1_c(\R^{n+1};\R^{n+1})\,,
\end{equation}
with $\HH_M(x)\cdot\tau=0$ for every $\tau\in T_xM$.
This basic fact motivates the following definitions.

Let $M$ be a locally $\H^n$-rectifiable set, and consider a Borel measurable function $\theta\in L^1_{{\rm loc}}(\H^n\llcorner M;\N)$. The {\it integer rectifiable varifold $\var(M,\theta)$} defined by $M$ and $\theta$, is the Radon measure on $\R^{n+1}\times G(n,n+1)$ defined as
\[
\int_{\R^{n+1}\times G(n,n+1)}\Phi\,d\var(M,\theta)=\int_M\,\Phi(x,T_xM)\,\theta(x)\,d\H^n_x\,,
\]
for every bounded, compactly supported Borel function $\Phi$ on $\R^{n+1}\times G(n,n+1)$. To each $X\in C^1_c(\R^{n+1};\R^{n+1})$ we associate the test function
\[
\Phi_X(x,T)=(\Div^T X)(x)\qquad (x,T)\in\R^{n+1}\times G(n,n+1)\,,
\]
where $\Div^TX$ is the divergence of $X$ with respect to $T$. Motivated by \eqref{tan dt}, we say that $\var(M,\theta)$ has {\it distributional mean curvature vector} $\HH_M\in L^1_{{\rm loc}}(\theta\,\H^n\llcorner M;\R^{n+1})$ if
\begin{equation}
  \label{HH def}
  \int_M\,\Div^M\,X\, \theta\,d\H^n=\int_M\,\HH_M\cdot X\,\theta\,d\H^n\,,\qquad\forall X\in C^1_c(\R^{n+1};\R^{n+1})\,.
\end{equation}
(The dependency of $\HH_M$ from $\theta$ is omitted.) When $|\HH_M|$ is constant ($\H^n$-a.e. on $M$) we say that $\var(M,\theta)$ has {\it constant distributional mean curvature on $\R^{n+1}$}; when $\HH_M=0$ we say that $\var(M,\theta)$ is {\it stationary on $\R^{n+1}$}. For example, if $M$ if a union of finitely many {\it possibly intersecting} spheres with same radius, then $M$ has constant distributional mean curvature in $\R^{n+1}$. Similarly, a finite union of hyperplanes is stationary in $\R^{n+1}$.

In the proof of Theorem \ref{thm main} we will exploit two forms of the maximum principle for integer rectifiable varifolds. The first one is a simple fact, well-known to experts, whose proof is included for the sake of clarity.

\begin{lemma}
  \label{lemma cone stationary varifold} Let $M$ be a normalized locally $\H^n$-rectifiable set such that $\var(M,\theta)$ is stationary on $\R^{n+1}$. If $M$ is a cone (that is, $M=t\,M$ for every $t>0$), and $M$ is contained in a closed half-space $H$ with $0\in\pa H$, then $M=\pa H$ and $\theta$ is constant. In particular, $M$ cannot be contained in the convex intersection of two distinct, non-opposite half-spaces containing the origin.
\end{lemma}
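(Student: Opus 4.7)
Write $\nu$ for the inward unit normal to $H$ (so $H = \{y : y \cdot \nu \ge 0\}$) and set $u(y) = y \cdot \nu$, which is nonnegative on $M$. My plan is to exploit stationarity of $\var(M,\theta)$ twice: first to push $M$ into $\partial H$ by testing against a cutoff multiple of $\nu$, and then to pin down $M$ as the entire plane $\partial H$ with constant multiplicity by testing against fields tangent to $\partial H$. The only geometric input I will need from the cone hypothesis is that for $\H^n$-a.e.\ $x \in M$ one has $x \in T_xM$, and therefore $\pi_{T_xM}(x) = x$. This follows immediately from \eqref{approximate tangent plane}: since $tx \in M$ for every $t > 0$, the rescaled set $(M - x)/\rho$ contains $\{(t-1)x/\rho : t > 0\}$, and these points sweep out the entire line through $0$ and $x$ as $\rho \to 0^+$, forcing that line to lie inside the approximate tangent plane.

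For the first step I would test stationarity against $X(y) = \chi(|y|^2)\,\nu$ with $\chi \in C^1_c([0,\infty))$ nonincreasing. Since $\nabla X = 2\chi'(|y|^2)\,\nu \otimes y$, the cone observation yields
\[
\Div^{T_xM} X \;=\; 2\chi'(|x|^2)\,\pi_{T_xM}(x)\cdot\nu \;=\; 2\chi'(|x|^2)\,(x\cdot\nu) \qquad \mbox{for $\H^n$-a.e.\ } x \in M,
\]
and stationarity \eqref{HH def} then gives
\[
0 \;=\; \int_M 2\,\chi'(|x|^2)\,(x\cdot\nu)\,\theta\,d\H^n.
\]
The integrand is pointwise nonpositive ($\chi' \le 0$, $x\cdot\nu \ge 0$ on $M \subset H$, $\theta \ge 0$), so it must vanish $\H^n$-a.e. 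Choosing $\chi(s) = \psi(s/R^2)$ with $\psi' < 0$ on $(1/2, 1)$ and letting $R > 0$ vary, I conclude $x\cdot\nu = 0$ for $\H^n$-a.e.\ $x \in M$; the normalization $M = \spt \H^n\llcorner M$ together with continuity of $y \mapsto y\cdot\nu$ then upgrades this to $M \subset \partial H$.

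For the second step, once $M \subset \partial H$ is in hand, the locality property \eqref{locality tangent plane} forces $T_xM = \partial H$ at $\H^n$-a.e.\ $x \in M$. Testing stationarity with $X = \eta\,e$ for $\eta \in C^1_c(\R^{n+1})$ and a fixed $e$ parallel to $\partial H$ gives $\Div^{T_xM} X = e \cdot \nabla \eta$, so
\[
\int_{\R^{n+1}} (e \cdot \nabla \eta)\,d\mu \;=\; 0, \qquad \mu := \theta\,\H^n\llcorner M.
\]
Hence the Radon measure $\mu$, supported on the hyperplane $\partial H$, is invariant under every translation parallel to $\partial H$, and must therefore equal $c\,\H^n\llcorner\partial H$ for some $c \ge 0$. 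Because $\theta$ takes positive integer values wherever $M$ is nonempty, $c \ge 1$, and the normalization of $M$ then forces $M = \spt \mu = \partial H$ with $\theta \equiv c$. The ``in particular'' assertion is immediate: if $M$ were contained in $H_1 \cap H_2$ for distinct non-opposite half-spaces through $0$, applying the main statement to each would give the contradictory identification $\partial H_1 = M = \partial H_2$. The only genuinely delicate point in the plan is the cone identity $\pi_{T_xM}(x) = x$; everything else is elementary bookkeeping around stationarity.
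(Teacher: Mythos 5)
Your proposal is essentially the paper's argument for the main step: the paper tests stationarity with $X(x)=\vphi(|x|)\,\nu$, uses the cone fact $\hat x\cdot\nu_M(x)=0$ (equivalently $x\in T_xM$) for $\H^n$-a.e.\ $x\in M$ to reduce $\Div^M X$ to $\vphi'(|x|)\,(\nu\cdot\hat x)$, and concludes $\nu\cdot\hat x=0$ $\H^n$-a.e.\ from the sign of the integrand --- exactly your first step, up to writing the cutoff as a function of $|y|^2$ instead of $|y|$. Your second step is actually an addition: the paper declares the lemma proved once $\nu\cdot\hat{x}=0$ a.e.\ is established, leaving the constancy assertions ($M=\pa H$, $\theta$ constant) implicit; your argument via the locality property \eqref{locality tangent plane}, the vanishing of the tangential distributional derivatives of $\mu=\theta\,\H^n\llcorner M$, and the identification $\mu=c\,\H^n\llcorner\pa H$ with $c\in\N$ fills this in correctly, and your reduction of the ``in particular'' statement is the intended one (note only that if the two half-spaces merely contain the origin rather than having it on their boundaries, one first replaces them by $\{z\cdot\nu_i\le 0\}$ using $M=tM$ and $t\to\infty$).

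The one point to fix is the justification of the cone identity $x\in T_xM$, which you yourself flag as the delicate step. The argument you give does not work: \eqref{approximate tangent plane} is a statement about convergence of the measures $\H^n\llcorner\big((M-x)/\rho\big)$, and the ray $\{(t-1)x/\rho:t>0\}$ is $\H^n$-negligible (for $n\ge 2$), hence invisible to every integral appearing there; set-theoretic containment of a null set in the rescalings does not force that line into the approximate tangent plane. The fact itself is true and standard, but it must be derived from the $n$-homogeneity of $\H^n\llcorner M$ under dilations rather than from \eqref{approximate tangent plane} alone: for instance, applying the coarea formula to $f(x)=|x|$ on $M$ together with the polar parametrization $(r,\omega)\mapsto r\,\omega$ of the cone over $M\cap\SS^n$ yields $\int_{M\cap B_R}|\nabla^M f|\,d\H^n=\H^n(M\cap B_R)$, and since $|\nabla^Mf|\le 1$ this forces $|\nabla^Mf|=1$, i.e.\ $\hat x\in T_xM$, $\H^n$-a.e. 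It is only fair to note that the paper asserts the same fact ($\hat x\cdot\nu_M=0$ a.e.) without proof, so your write-up is at no disadvantage once this justification is replaced.
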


\begin{proof}
  Let $H=\{z\in\R^{n+1}:z\cdot\nu<0\}$ where $\nu\in\SS^n$. Given $\vphi\in C^\infty_c([0,\infty))$ with $0\le\vphi\le 1$, $\vphi(r)=1$ on $[0,\e)$ for some $\e>0$, and $\vphi'(r)<0$ on $\{0<\vphi<1\}$, let us set $X(x)=\vphi(|x|)\,\nu$ for $x\in\R^{n+1}$. Then $X\in C^\infty_c(\R^{n+1};\R^{n+1})$ and $\nabla X=\vphi'(|x|)\nu\otimes\hat{x}$, where $\hat{x}=x/|x|$ if $x\ne 0$. Let $\nu_M:M\to\SS^n$ be a Borel vector-field such that $T_xM=\nu_M(x)^\perp$ for $\H^n$-a.e. $x\in M$. Since $\hat{x}\cdot\nu_M(x)=0$ for $\H^n$-a.e. $x\in M$, we have
  \[
  \Div^{ M}X=\Div\,X-\nu_M\cdot\nabla X[\nu_M]=\vphi'(|x|)\,\Big(\nu\cdot\hat{x}-(\nu_M\cdot\nu)(\nu_M\cdot\hat x)\Big)=\vphi'(|x|)\,(\nu\cdot\hat{x})\,,
  \]
  and thus
  \[
  0=\int_M\Div^MX\,\theta\,d\H^n=\int_{ M}\vphi'(|x|)\,(\nu\cdot\hat{x})\,\theta(x)\,d\H^n(x)\,.
  \]
  Since $M\subset H$ implies $\hat x\cdot\nu\le0$ for every $x\in M$, $x\ne 0$, thanks to the arbitrariness of $\vphi$ we find $\nu\cdot\hat{x}=0$ for $\H^n$-a.e. $x\in M$. The lemma is proved.
\end{proof}

The second tool we shall use is a much deeper result, namely, Sch\"atzle's strong maximum principle for integer rectifiable varifolds with sufficiently summable distributional mean curvature \cite{schatzle}. The statement we adopt here is a slightly simplified version, still sufficient for our purposes, of \cite[Theorem 6.2]{schatzle}.

\begin{theorem}\label{thm mp}
  Let $M$ be a normalized locally $\H^n$-rectifiable set with distributional mean curvature vector $\HH_M\in L^p(\theta\,\H^n\llcorner M;\R^{n+1})$ for some $p>\max\{2,n\}$.

  Pick $\nu\in\SS^n$, $h_0\in\R$, and consider a connected open set $U\subset \nu^\perp$ such that
  \begin{equation}
    \label{def of f}
      \vphi(z)=\inf\Big\{h>h_0:z+h\,\nu\in  M\Big\}\qquad z\in U\,,
  \end{equation}
  satisfies $\vphi(z)\in(h_0,\infty)$ for every $z\in U$.

  If $\eta\in W^{2,p}(U;(h_0,\infty))$ is such that $\eta\le\vphi$ on $U$ and $\eta(z_0)=\vphi(z_0)$ for some $z_0\in U$, then it cannot be
  \begin{equation}
    \label{signs}
      -\Div\Big(\frac{\nabla \eta}{\sqrt{1+|\nabla \eta|^2}}\Big)(z)\le \HH_M(z+\vphi(z)\nu)\cdot\,\frac{-\nabla \vphi(z)+\nu}{\sqrt{1+|\nabla \vphi(z)|^2}}
  \end{equation}
  for $\H^n$-a.e. $z\in U$, unless $\eta=\vphi$ on $U$.
\end{theorem}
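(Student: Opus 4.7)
The plan is to reduce Theorem \ref{thm mp} to the classical strong maximum principle for quasilinear elliptic equations, via Allard's regularity theorem applied at the contact point. First observe that by its definition as an infimum, $\vphi$ is lower semicontinuous on $U$, the graph of $\vphi$ is contained in $M$, and near $x_0 = z_0 + \vphi(z_0)\nu$ the set $M$ lies in $\{z + h\nu : z \in U,\ h \ge \vphi(z)\}$. At $x_0$ the graph of the $W^{2,p}$ function $\eta$ is therefore a smooth hypersurface touching $M$ from the $-\nu$-side.

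Next, I would invoke Allard's regularity theorem in the $L^p$-mean-curvature version, which is applicable because $p > \max\{2,n\}$. The touching condition with the $C^{1,\alpha}$ barrier $\eta$ (obtained from $W^{2,p}$ via Morrey embedding) forces, through the monotonicity formula, the lower-side density of $M$ at $x_0$ to equal $1$ and the approximate tangent plane of $M$ at $x_0$ to coincide with the tangent of the graph of $\eta$ at $x_0$. Allard's $\e$-regularity theorem then yields, on some $B_r(x_0)$, that the lowest sheet of $M$ is a $W^{2,p}$ graph over $\nu^\perp$ with gradient close to $\nabla \eta(z_0)$; by construction this graph must be $z \mapsto z + \vphi(z)\nu$ on $B_r(z_0)$, so $\vphi \in W^{2,p}(B_r(z_0))$.

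With this regularity in hand, the distributional mean curvature equation \eqref{HH def} applied to the lowest sheet reads
\[
-\Div\Big(\frac{\nabla \vphi}{\sqrt{1+|\nabla \vphi|^2}}\Big)(z) \;=\; \HH_M(z+\vphi(z)\nu)\cdot \frac{-\nabla \vphi(z)+\nu}{\sqrt{1+|\nabla \vphi(z)|^2}}
\]
for a.e.\ $z \in B_r(z_0)$. Combining this identity with the assumed inequality \eqref{signs}, the difference $w = \vphi - \eta \ge 0$ satisfies, after a standard linearization of the mean curvature operator along a convex combination of $\nabla \vphi$ and $\nabla \eta$, a uniformly elliptic inequality $\L w \ge 0$ on $B_r(z_0)$ with bounded measurable principal part and lower-order coefficients in $L^p$ (with $p > n$). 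Since $w(z_0) = 0$, the Bony--Trudinger strong maximum principle for $W^{2,p}$ supersolutions of such operators forces $w \equiv 0$ in a neighborhood of $z_0$. Finally, I would globalize: the coincidence set $E = \{z \in U : \eta(z) = \vphi(z)\}$ is closed by lower semicontinuity of $\vphi$, and the argument just made shows it is open in $U$ (the Allard analysis can be re-run at every point of $E$); connectedness of $U$ then gives $E = U$.

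The main obstacle is the second step: rigorously setting up Allard's $\e$-regularity at the touching point. A priori $\vphi$ is merely lower semicontinuous, $M$ may carry several sheets above $x_0$ with integer multiplicities greater than one contributing to the distributional mean curvature, and the usual Allard hypothesis (small tilt-excess plus density close to $1$ in a full ball) is not directly available. The smooth barrier $\eta$ is precisely what allows one to control the lower-side tangent plane and density of $M$, and Sch\"atzle's refinement of Allard's theorem is the technical heart of the argument, as it extracts $W^{2,p}$-regularity of the lowest sheet alone and relates its mean curvature pointwise to $\HH_M$ without needing to disentangle the contributions of the higher sheets.
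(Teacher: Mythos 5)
The paper does not actually prove this statement: Theorem \ref{thm mp} is quoted as a (slightly simplified) version of \cite[Theorem 6.2]{schatzle}, so the only fair comparison is with Sch\"atzle's proof, and your proposal does not reconstruct it. The decisive gap is your second step. Allard's $\e$-regularity requires the area ratio of the \emph{whole} varifold in full balls around $x_0$ to be close to $1$ (together with small tilt-excess); a one-sided $C^{1,\alpha}$ barrier touching $M$ from below gives no such control, because the density of $\var(M,\theta)$ at the contact point can be any integer $\ge 2$ --- think of two unit-density sheets mutually tangent at $x_0$, or a single sheet carrying multiplicity $\theta=2$. There is no ``lower-side density'' in the monotonicity formula, so the claim that the touching condition forces density $1$ at $x_0$ is unjustified and in general false, and with it the conclusion that the lowest sheet is a $W^{2,p}$ graph near $z_0$ coinciding with $\vphi$.

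Even granting some regularity of $\vphi$, the a.e.\ identity $-\Div\big(\nabla\vphi/\sqrt{1+|\nabla\vphi|^2}\big)=\HH_M(\cdot+\vphi\,\nu)\cdot(-\nabla\vphi+\nu)/\sqrt{1+|\nabla\vphi|^2}$ is exactly what cannot be asserted: $\HH_M$ is the distributional mean curvature of the full varifold $\var(M,\theta)$, the sheets may overlap along sets of positive $\H^n$-measure, and the first variation does not localize to the lower envelope --- this is precisely ``the considerable difficulty'' the paper flags right after the statement. You acknowledge this obstacle and defer to ``Sch\"atzle's refinement of Allard's theorem'' for the $W^{2,p}$-regularity of the lowest sheet and the pointwise relation of its curvature to $\HH_M$; but that refinement \emph{is} the content of the theorem you were asked to prove, so the argument is circular. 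The outer shell of your proposal --- linearizing the difference of the two nonparametric mean curvature operators, applying the strong maximum principle for $W^{2,p}$ supersolutions with $L^p$, $p>n$, lower-order coefficients, and running an open/closed argument on the coincidence set in the connected set $U$ --- is the natural endgame once the hard regularity step for the lower envelope is available, but as written the proof is missing its core.
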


The signs in \eqref{signs} and the geometric intuition behind Theorem \ref{thm mp} are illustrated in
\begin{figure}
  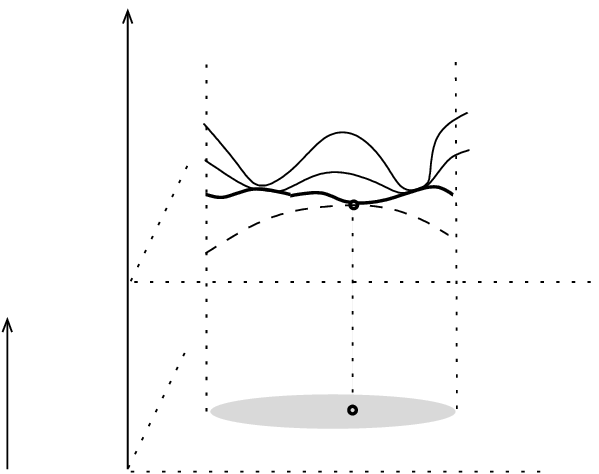\caption{{\small The strong maximum principle for integer varifolds. The rectifiable set $M$ may consist of multiple sheets which, combined together with the multiplicity function $\theta$, have distributional mean curvature $\HH_M$ in some $L^p$. The sheets may overlap in complicated ways along sets of positive area, so there is a nontrivial relation between the mean curvature vector $\HH_M$ of the whole configuration, and that of a single sheet. The function $\vphi$ describes the lower sheet of $M$ {\it above height $h_0$ with respect to the direction $\nu$} and projecting over an open set $U\subset\nu^\perp$. This lower sheet is shown to satisfy a strong maximum principle. Notice that the role of $h_0$ is that of localizing the part of the varifold we are looking at. For example, in this picture, $M$ could have many more points of the form $z+h\,\nu$ with $h<h_0$ and $z\in U$, but these points will not contribute to the definition of $\vphi$.}}\label{fig mp}
\end{figure}
Figure \ref{fig mp}.
The left-hand side is the mean curvature of the subgraph of $\eta$ with respect to its outer unit normal $(-\nabla \eta+\nu)/\sqrt{1+|\nabla \eta|^2}$, and, similarly, the right-hand side is the mean curvature of the subgraph of $\vphi$ with respect to its outer unit normal. So, if $\eta$ touches $\vphi$ from below at $z_0$, it cannot be that the subgraph of $\eta$ is in average bent upwards at least as much as the subgraph of $\eta$, unless $\eta=\vphi$. The considerable difficulty of the theorem lies in the fact that $\HH_M$ does not come into play as the mean curvature of the graph of $\vphi$, but rather as the mean curvature of a more complex structure (the integer rectifiable varifold $\var(M,\theta)$), of which $\vphi$ only represents a sort of lower envelope localized in the cylinder $\{z+t\nu:z\in U\,,t>h_0\}$.

\subsection{Sets of finite perimeter}\label{section sofp} A Borel set $\Om\subset\R^{n+1}$ has {\it locally finite perimeter} if there exists an $\R^{n+1}$-valued Radon measure $\mu_\Om$ on $\R^{n+1}$ such that
\begin{equation}
  \label{gauss green 0}
  \int_\Om\Div\,X=\int_{\R^{n+1}}X\cdot d\mu_\Om\qquad\forall X\in C^1_c(\R^{n+1};\R^{n+1})\,.
\end{equation}
The {\it perimeter of $\Om$ relative to an open set $A$} is defined as $P(\Om;A)=|\mu_\Om|(A)$, where $|\mu_\Om|$ is the total variation of $\mu_\Om$, and $\Om$ has {\it finite perimeter} if $P(\Om)=P(\Om;\R^{n+1})<\infty$. In this case, either $\Om$ or its complement has finite volume. By exploiting \eqref{gauss green 0}, the support of $\mu_\Om$ is seen to satisfy
\begin{equation}
  \label{spt mu Omega}
  \spt\mu_\Om=\Big\{x\in\R^{n+1}:0<|B_\rho(x)\cap\Om|<\om_n\rho^n\quad\forall\rho>0\Big\}\subset\pa\Om\,,
\end{equation}
see \cite[Proposition 12.19]{maggiBOOK}. Notice that $\spt\mu_\Om$ is invariant by zero-volume modifications of $\Om$, while of course $\pa\Om$ is not. The {\it reduced boundary} of a set of locally finite perimeter $\Om$ is defined as the set of points such that
\begin{equation}
  \label{measuretheoretic outer unit normal}
  \nu_\Om(x)=\lim_{\rho\to 0^+}\frac{\mu_\Om(B_\rho(x))}{|\mu_\Om|(B_\rho(x))}\quad\mbox{exists and belongs to $\SS^n$}\,.
\end{equation}
The Borel vector-field $\nu_\Om:\pa^*\Om\to\SS^n$ is called the {\it measure-theoretic outer unit normal to $\Om$}, and we always have
\begin{equation}
  \label{normalization top boundary}
  \ov{\pa^*\Om}=\spt\mu_\Om\,.
\end{equation}
Moreover by \cite[Theorem 15.9]{maggiBOOK}, the reduced boundary is locally $\H^n$-rectifiable, with
\[
\mu_\Om=\nu_\Om\,\H^n\llcorner\pa^*\Om\,,\qquad P(\Om;A)=\H^n(A\cap\pa^*\Om)
\]
for every open set $A\subset\R^{n+1}$, and thus \eqref{gauss green 0} takes the form
\begin{equation}
  \label{gauss-green}
  \int_\Om\,\Div X\,=\int_{\pa^*\Om}X\cdot\nu_\Om\,d\H^n\qquad\forall X\in C^1_c(\R^{n+1};\R^{n+1})\,,
\end{equation}
In addition, for every $x\in\pa^*\Om$, $\nu_\Om(x)^\perp=T_x(\pa^*\Om)$ is the approximate tangent plane to $\pa^*\Om$ at $x$ and in particular we have
\begin{equation}
  \label{red bndrt omn}
  \lim_{\rho\to0^+}\frac{\H^n(B_\rho(x)\cap\pa^*\Om)}{\rho^n}=\om_n\qquad\forall x\in\pa^*\Om\,.
\end{equation}
To every set $\Om$ of locally finite perimeter we can always associate in a natural way an integer rectifiable varifold $\var(\pa^*\Om,1)$. If $\var(\pa^*\Om,1)$ admits a distributional mean curvature vector $\HH_{\pa^*\Om}$, then the {\it distributional mean curvature of $\Om$} is defined by setting
\begin{equation}
  \label{HOmega}
  H_\Om=\HH_{\pa^*\Om}\cdot\nu_\Om\,.
\end{equation}
The subscript $\Om$ on $H_\Om$ reminds that we have used the outer orientation of $\Om$ to specify the scalar curvature. With this notation, $H_{B_r}=n/r$ for every $r>0$.

\subsection{Basic properties of critical points}\label{section critical points account} Here we prove some properties of critical points in the isoperimetric problem which descend from generally known facts about integer varifolds and sets of finite perimeter. A set of finite perimeter and finite volume $\Om$ is a {\it critical point for the isoperimetric problem} if
\begin{equation}
  \label{critical point}
  \frac{d}{dt}\Big|_{t=0}P(f_t(\Om))=0
\end{equation}
whenever $\{f_t\}_{|t|<1}$ is a one-parameter family of diffeomorphisms with $f_0=\Id$, $|f_t(\Om)|=|\Om|$ and $\spt (f_t-\Id)\cc\R^{n+1}$ for every $|t|<1$. By \cite[Theorem 17.20]{maggiBOOK}, \eqref{critical point} is equivalent to the existence of a constant $\l\in\R$ such that
\begin{equation}
  \label{cmc}
  \int_{\pa^*\Om}\Div^{\pa^*\Om}X\,d\H^n=\l\,\int_{\pa^*\Om}X\cdot\nu_\Om\,d\H^n\,,\qquad\forall X\in C^1_c(\R^{n+1};\R^{n+1})\,.
\end{equation}

\begin{lemma}\label{lemma prep}
  If $\Om\subset\R^{n+1}$ is a critical point for the isoperimetric problem, then $\Om$ is (equivalent modulo sets of volume zero to) a bounded open set such that $\pa\Om=\spt\mu_\Om$ and $\H^n(\pa\Om\setminus\pa^*\Om)=0$. Moreover, the constant $\l$ in \eqref{cmc} is equal to
  \begin{equation}
  \label{lagrange}
  H^0_\Om=\frac{n\,P(\Om)}{(n+1)|\Om|}\,,
  \end{equation}
  that is, $H_\Om\equiv H^0_\Om$. Finally,
  \[
  \pa^*\Om=\Big\{x\in\pa\Om:\lim_{\rho\to 0^+}\frac{\H^n(B_\rho(x)\cap\pa\Om)}{\rho^n}=\om_n\Big\}
  \]
  is locally an analytic hypersurface with constant mean curvature, relatively open in $\pa\Om$.
\end{lemma}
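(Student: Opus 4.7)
The plan is to combine Allard's regularity and monotonicity theorems for the integer varifold $\var(\pa^*\Om,1)$---whose generalized mean curvature vector equals $\l\,\nu_\Om$ by \eqref{cmc} and \eqref{HOmega} and is therefore bounded---with the Gauss-Green formula \eqref{gauss-green}. First I would replace $\Om$ by its set of Lebesgue density-one points; by \eqref{spt mu Omega} this modification is volume-preserving and yields the open representative with $\pa\Om=\spt\mu_\Om=\ov{\pa^*\Om}$. Allard's monotonicity gives the uniform density lower bound $\H^n(\pa^*\Om\cap B_\rho(x))\ge c_n\rho^n$ for every $x\in\spt\mu_\Om$ and every $\rho\le\rho_0(|\l|)$. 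A finite cover of $\spt\mu_\Om$ by $\rho_0$-balls together with $\H^n(\pa^*\Om)=P(\Om)<\infty$ forces $\pa\Om$ to be bounded, hence compact; since $|\Om|<\infty$, the unbounded connected component of $\R^{n+1}\setminus\pa\Om$ lies in the complement of $\Om$, so $\Om$ is bounded too.

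Allard's $\e$-regularity theorem, applied at each $x\in\pa^*\Om$ (where $\theta^n(|\mu_\Om|,x)=1$ by \eqref{red bndrt omn}), shows that $\spt\mu_\Om=\pa\Om$ is a $C^{1,\a}$ graph in a neighborhood of $x$; the quasilinear elliptic CMC equation together with a Schauder bootstrap upgrades this to analytic regularity. So $\pa^*\Om$ is relatively open in $\pa\Om$ and is locally an analytic CMC hypersurface, whence $\Si=\pa\Om\setminus\pa^*\Om$ is closed in $\pa\Om$. To show $\H^n(\Si)=0$, note that $\Si\subset\spt\mu_\Om$, so the pointwise bound $\theta^n(|\mu_\Om|,x)\ge c_n>0$ holds at \emph{every} $x\in\Si$; by the standard density-to-mass comparison this forces $|\mu_\Om|(\Si)\ge c_n\,\H^n(\Si)$, and since $|\mu_\Om|=\H^n\llcorner\pa^*\Om$ together with $\Si\cap\pa^*\Om=\emptyset$ gives $|\mu_\Om|(\Si)=0$, we conclude $\H^n(\Si)=0$.

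To identify $\l$, I would test \eqref{cmc} against $X(x)=\chi(x)\,x$ with $\chi\in C^1_c(\R^{n+1})$ identically one on a neighborhood of $\ov\Om$. On $\pa^*\Om$ one has $\Div^{\pa^*\Om}X=(n+1)-|\nu_\Om|^2=n$ and on $\Om$ one has $\Div X=n+1$; combining \eqref{cmc} with \eqref{gauss-green} then yields
\[
n\,P(\Om)=\int_{\pa^*\Om}\Div^{\pa^*\Om}X\,d\H^n=\l\int_{\pa^*\Om}X\cdot\nu_\Om\,d\H^n=\l\int_\Om\Div X\,dx=\l\,(n+1)|\Om|,
\]
so $\l=H^0_\Om$, and \eqref{HOmega} reads $H_\Om\equiv H^0_\Om$. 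For the density characterization: if $x\in\pa^*\Om$ then $\pa\Om$ is smooth near $x$ and $\H^n(B_\rho(x)\cap\pa\Om)/\rho^n\to\om_n$; conversely, if $x\in\pa\Om$ satisfies this limit, then $\H^n(\pa\Om\setminus\pa^*\Om)=0$ yields $\theta^n(|\mu_\Om|,x)=1$, whence Allard's $\e$-regularity places $x\in\pa^*\Om$.

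The main subtlety is the proof that $\H^n(\Si)=0$: it exploits the tension between the pointwise density lower bound coming from Allard's monotonicity (valid at \emph{every} point of $\spt\mu_\Om$, not just $\H^n$-a.e.) and the fact that $|\mu_\Om|$ assigns zero mass to any set disjoint from $\pa^*\Om$. The rest is routine varifold regularity combined with the one-line divergence-theorem computation for $\l$.
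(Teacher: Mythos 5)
Your proposal is correct and follows essentially the same route as the paper: the monotonicity formula for $\var(\pa^*\Om,1)$ with bounded mean curvature $\l\,\nu_\Om$ gives the uniform density lower bound on $\spt\mu_\Om$, which yields boundedness, $\H^n(\spt\mu_\Om\setminus\pa^*\Om)=0$, and the open representative; the multiplier $\l=H^0_\Om$ comes from testing \eqref{cmc} and \eqref{gauss-green} with $X(x)=\chi(x)\,x$; and Allard's $\e$-regularity plus elliptic bootstrap gives the analytic CMC structure and the density characterization of $\pa^*\Om$. The only cosmetic point is that the openness of the density-one representative is not a consequence of \eqref{spt mu Omega} alone but of $|\spt\mu_\Om|=0$, which follows from the very density lower bound you derive (the paper handles this by working with the explicitly open set $\Om_1$ of points surrounded by full-measure balls), so the ingredients you list do close this step.
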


\begin{proof} By \cite[Theorem 17.6]{SimonLN}, condition \eqref{cmc} implies that for every $x\in\R^{n+1}$,
\begin{equation}
  \label{mono}
  e^{|\l|\rho}\frac{\H^n(B_\rho(x)\cap\pa^*\Om)}{\rho^n}\quad\mbox{is increasing on $\rho>0$}\,,
\end{equation}
which combined with \eqref{red bndrt omn} and \eqref{normalization top boundary} gives
\begin{equation}
  \label{uniform lb}
  \H^n(B_\rho(x)\cap\pa^*\Om)\ge \om_n\,e^{-|\l|}\,\rho^n\qquad\forall \rho\in(0,1),x\in\spt\mu_\Om\,.
\end{equation}
A first consequence of the lower bound \eqref{uniform lb} is that
\begin{equation}
  \label{equivalence in area}
  \H^n(\spt\mu_\Om\setminus\pa^*\Om)=0\,,
\end{equation}
see, e.g., \cite[Exercise 17.19]{maggiBOOK}; moreover, by combining \eqref{uniform lb} with $P(\Om)<\infty$ and a covering argument, we see that $\spt\mu_\Om$ is bounded.

Let us now consider the open set $\Om_1$ of those $x\in\R^{n+1}$ such that $|\Om\cap B_\rho(x)|=|B_\rho(x)|$ for every $\rho$ small enough, and the open set $\Om_0$ of those $x\in\R^{n+1}$ such that $|\Om\cap B_\rho(x)|=0$ for every $\rho$ small enough, so that
\begin{equation}
  \label{thanks to}
  \spt\mu_\Om=\R^{n+1}\setminus(\Om_0\cup\Om_1)\,,
\end{equation}
thanks to \eqref{spt mu Omega}. If $\Om^{(1)}$ denotes the set of points of density $1$ of $\Om$, then $\Om_1\subset\Om^{(1)}$, while
\[
|\Om^{(1)}\setminus\Om_1|=|\Om^{(1)}\cap\Om_0|+|\Om^{(1)}\cap\spt\mu_\Om|=|\Om^{(1)}\cap\spt\mu_\Om|=0
\]
as $\H^n(\spt\mu_\Om)<\infty$ thanks to \eqref{equivalence in area}. Thus $|\Om^{(1)}\Delta\Om_1|=0$, and then $|\Om\Delta\Om_1|=0$ by the Lebesgue density theorem. Since $\Om_0$ and $\Om_1$ are disjoint open sets, \eqref{thanks to} implies $\pa\Om_1\subset\spt\mu_\Om$. At the same time, $|\Om\Delta\Om_1|=0$ and the inclusion in \eqref{spt mu Omega} imply $\spt\mu_\Om\subset\pa\Om_1$. Hence $\spt\mu_\Om=\pa\Om_1$, and since $\spt\mu_\Om=\pa\Om_1$ is bounded and $|\Om_1|<\infty$, we have that $\Om_1$ is bounded. The first part of the statement is proved.

We show that $\l$ in \eqref{cmc} satisfies $\l=H^0_\Om$ with $H^0_\Om$ defined in \eqref{lagrange}. Since $\Om$ is bounded we can test both \eqref{gauss-green} and \eqref{cmc} with $X\in C^1_c(\R^{n+1};\R^{n+1})$ with $X(x)=x$ for $x$ in a neighborhood of $\Om$. Hence,
\begin{eqnarray}\nonumber
  (n+1)|\Om|&=&\int_\Om\Div(x)\,dx=\int_\Om\Div X=\int_{\pa^*\Om}X\cdot\nu_\Om\,d\H^n=\frac1\l\int_{\pa^*\Om}\Div^{\pa^*\Om}X\,d\H^n
  \\\label{H0 computation}
  &=&\frac1\l\,\int_{\pa^*\Om}\Div^{\pa^*\Om}(x)\,d\H^n_x=\frac{n\,P(\Om)}\l\,,
\end{eqnarray}
and thus $\l=H^0_\Om$.

Finally, by applying Allard's regularity theorem (see \cite[Theorem 24.2]{SimonLN} or \cite{DeLellisNOTES}) to $\var(\pa\Om,1)$, we see that $\pa\Om$ is an analytic constant mean curvature hypersurface in a neighborhood of every $x\in\pa\Om$ such that
\begin{equation}
  \label{reg}
  \lim_{\rho\to 0^+}\frac{\H^n(B_\rho(x)\cap\pa\Om)}{\rho^n}=\om_n\,.
\end{equation}
In particular, if  $x\in\pa\Om$ satisfies \eqref{reg} then there exists $\rho>0$ such that $B_\rho(x)\cap\Om$ is the epigraph of an analytic function, and thus $x\in\pa^*\Om$. Viceversa, \eqref{reg} holds everywhere on $\pa^*\Om$ thanks to \eqref{red bndrt omn}.
\end{proof}

We also notice a simple consequence of Lemma \ref{lemma cone stationary varifold}.

\begin{lemma}\label{lemma negligible opposites}
  If $\Om\subset\R^{n+1}$ is a critical point for the isoperimetric problem, $x\in\pa\Om$, and $y_1, y_2\in\Om$ are such that $|y_i-x|=\dist(y_i,\pa\Om)$ and $|x-y_1|=|x-y_2|$, then $x-y_1=y_2-x$.
\end{lemma}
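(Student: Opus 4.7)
The plan is to apply Lemma \ref{lemma cone stationary varifold} to a tangent cone of $\var(\pa^*\Om,1)$ at $x$. Set $r=|x-y_i|$ and $\nu_i=(x-y_i)/r\in\SS^n$ for $i=1,2$. Since $\dist(y_i,\pa\Om)=r$, the ball $B_r(y_i)$ is disjoint from $\pa\Om$; being connected and containing the interior point $y_i$ of the open set $\Om$ (open by Lemma \ref{lemma prep}), it satisfies $B_r(y_i)\subset\Om$, and $\nu_i$ is the outer unit normal to $B_r(y_i)$ at $x$. The desired identity $x-y_1=y_2-x$ is equivalent to $\nu_2=-\nu_1$; on the other hand, $\nu_1=\nu_2$ forces $y_1=y_2$ and then $x=y_1\in\Om$, against $x\in\pa\Om$. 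So it suffices to exclude the case $\nu_1\ne\pm\nu_2$.

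Next I would blow up the varifold $\var(\pa^*\Om,1)$ at $x$. By Lemma \ref{lemma prep} the distributional mean curvature of $\Om$ is the constant $H^0_\Om$, so the rescaled varifolds $V_\rho$ obtained by pushing $\var(\pa^*\Om,1)$ forward under $\Phi_\rho(z)=(z-x)/\rho$ have generalized mean curvature of magnitude $\rho\,|H^0_\Om|\to 0$. The monotonicity formula \eqref{mono} bounds $\rho^{-n}\H^n(B_\rho(x)\cap\pa^*\Om)$ from above, while \eqref{uniform lb} (together with $x\in\pa\Om=\spt\mu_\Om$) bounds it from below. Allard's compactness theorem therefore delivers, along a subsequence $\rho_k\to 0$, weak-$*$ convergence of $V_{\rho_k}$ to a nonempty stationary integer rectifiable varifold $V_\infty=\var(C,\theta)$, which is a cone by the monotonicity formula applied to $V_\infty$ itself. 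We may take $C$ normalized.

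The key geometric input is the inclusion $C\subset\{z\cdot\nu_1\ge 0\}\cap\{z\cdot\nu_2\ge 0\}$. Indeed, the rescaled ball $B_{r/\rho}((y_i-x)/\rho)$ lies inside $\Phi_\rho(\Om)$ and is therefore disjoint from $\spt V_\rho\subset\Phi_\rho(\pa\Om)$; as $\rho\to 0$ these balls exhaust the open half-space $H_i=\{z\cdot\nu_i<0\}$ in the sense that every compact $K\subset H_i$ lies inside $B_{r/\rho}(-r\nu_i/\rho)$ for all sufficiently small $\rho$. Weak-$*$ lower semicontinuity of the weight measures on open sets then gives $\|V_\infty\|(H_i)=0$ for $i=1,2$, so $C\cap(H_1\cup H_2)=\emptyset$.

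Finally, Lemma \ref{lemma cone stationary varifold} rules out a nonempty normalized stationary integer rectifiable cone contained in the convex intersection of two distinct, non-opposite closed half-spaces through the origin. Combined with the reduction in the first step, this forces $\nu_1=-\nu_2$, which is exactly $x-y_1=y_2-x$. No serious obstacle is expected; the one point requiring a little care is the passage from the set-theoretic exterior ball condition $B_r(y_i)\subset\Om$ to the varifold-level avoidance $C\cap H_i=\emptyset$, but this is routine given that the rescaled balls converge in the Kuratowski sense to the half-spaces $H_i$.
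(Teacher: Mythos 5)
Your proposal is correct and follows essentially the same route as the paper: blow up $\var(\pa\Om,1)$ at $x$ to obtain a stationary integer rectifiable cone, observe that it avoids the half-spaces arising as blow-ups of the interior touching balls $B_r(y_i)\subset\Om$, and invoke Lemma \ref{lemma cone stationary varifold} to exclude containment in two distinct non-opposite half-spaces, forcing $\nu_1=-\nu_2$; your extra detail (Allard compactness, monotonicity, Kuratowski convergence of the rescaled balls) merely spells out what the paper delegates to a citation. One small remark: your claim that $\nu_1=\nu_2$ ``forces $x=y_1\in\Om$'' does not follow from the hypotheses (it is what the conclusion would require when $y_1=y_2$); like the paper, the argument really only treats $y_1\ne y_2$, which is the only case needed in the application.
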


\begin{proof}
  Since $\var(\pa\Om,1)$ is an integer varifold of constant distributional mean curvature, it admits at least one blow-up limit in the weak convergence of varifolds at $x$, and each such limit varifold is stationary and supported on a cone $M$; see \cite[Chapter 46]{SimonLN}. By construction, $M$ is contained in the half-spaces $\{z\cdot\nu_i\le 0\}$ defined by $\nu_i=(x-y_i)/|x-y_i|$, $i=1,2$. If $y_1\ne y_2$, then $\nu_1\ne\nu_2$, and Lemma \ref{lemma cone stationary varifold} implies that $\nu_1=-\nu_2$.
\end{proof}

\section{Critical points of the isoperimetric problem}\label{section criticla points} Referring to the introduction for the general strategy, we now present the proof of Theorem \ref{thm main}. At the end of the section we also prove Corollary \ref{corollary main}.

\begin{proof}[Proof of Theorem \ref{thm main}] Let $\Om$ be a set with finite perimeter and finite volume which is a critical point for the isoperimetric problem. The conclusion of Lemma \ref{lemma prep} is the starting point of our analysis, aimed at showing that $\Om$ is a finite union of disjoint balls of radius $n/H^0_\Om$. We rescale $\Om$ so that $H^0_\Om=n$.

\bigskip

\noindent {\it Properties of the distance function}: We set $u(y)=\dist(y,\pa\Om)$ for $y\in\R^{n+1}$, so that
\begin{equation}
  \label{Ny}
  N(y)=\nabla u(y)\in\SS^n\qquad\mbox{exists for a.e. $y\in\Om$}\,,
\end{equation}
thanks to Rademacher theorem. For $s>0$ we set
\[
\Om_s=\big\{y\in\Om:u(y)>s\big\}\qquad \pa\Om_s=\big\{y\in\Om:u(y)=s\big\}\,,
\]
and recall that, by the coarea formula \cite[Theorem 13.1,Theorem 18.1]{maggiBOOK}, $\Om_s$ is a set of finite perimeter for a.e. $s>0$, and for every Borel set $E\subset\R^{n+1}$,
\begin{equation}
  \label{coarea}
  |E|=\int_0^\infty\H^n(E\cap\pa^*\Om_s)\,ds=\int_0^\infty\H^n(E\cap\pa\Om_s)\,ds\,.
\end{equation}
In particular,
\begin{equation}
  \label{omegas meno omegas}
  \H^n(\pa\Om_s\setminus\pa^*\Om_s)=0\,,\qquad\mbox{for a.e. $s>0$}\,.
\end{equation}
We recall that for a.e. $y\in\Om$, $u$ admits a second order Taylor expansion at $y$. Indeed, given $A\subset\Om$ and $y\in \Om$, denote by $\bar\Theta(u,A)(y)$ the infimum of the constants $c>0$ such that for $a,b\in\R$ we have
\[
a+b\cdot z+c\,\frac{|z|^2}2\ge u(z)\qquad\forall z\in A\,,
\]
with equality at $y$. For any $y\in\Om$ we can pick $x\in\pa\Om$ such that $|x-y|=u(y)$,
\begin{equation}
  \label{bookpeople}
  u(z)=\dist(z,\pa\Om)\le\dist(z,\{x\})=|z-x|\,,\qquad\forall z\in\Om\,,
\end{equation}
that is, $z\mapsto|z-x|$ touches $u$ from above at $y$ over $\Om$. At the same time we can construct a second order polynomial that touches $z\mapsto|z-x|$ from above at $y$ over $\R^{n+1}$. Indeed, it holds
\begin{equation}
  \label{par up}
  |z-x|\le|y-x|+\frac{y-x}{|y-x|}\cdot(z-y)+\,\frac{|z-y|^2}{2|y-x|}\qquad\forall z\in\R^{n+1}\,.
\end{equation}
To check this set $y=x+t\,v$ for $t>0$ and $|v|=1$, and set $w=z-y$, so that \eqref{par up} becomes
\[
|t\,v+w|\le t+v\cdot w+\frac{|w|^2}{2t}\qquad\forall w\in\R^{n+1}\,.
\]
Taking squares this is equivalent to
\begin{eqnarray*}
t^2+2\,t\,v\cdot w+|w|^2&\le&t^2+2t\,v\cdot w+|w|^2+(v\cdot w)^2+\frac{(v\cdot w)\,|w|^2}{t}+\frac{|w|^4}{4t^2}
\\
&=&  t^2+2t\,v\cdot w+|w|^2+\Big(v\cdot w+\frac{|w|^2}{2t}\Big)^2\,,
\end{eqnarray*}
which clearly holds for every $w\in\R^{n+1}$. Thanks to \eqref{par up} there exists $a,b\in\R$ such that
\begin{eqnarray*}
|z-x|\le a+b\cdot z+\frac{|z|^2}{2|y-x|}\qquad\forall z\in\R^{n+1}\,,
\end{eqnarray*}
with equality if $z=y$, so that, by definition of $\bar\Theta$ and by \eqref{bookpeople}
\begin{equation}
  \label{cc}\bar\Theta(u,\Om)(y)\le \frac1{u(y)}\,,\qquad\forall y\in\Om\,.
\end{equation}
Arguing as in \cite[Proposition 1.6]{caffacabre}, we see that $u$ is twice differentiable a.e. in $\Omega$.

\bigskip

\noindent {\it Preliminary properties of the sets $\Gamma_s^t$}: For every $t>s>0$, we consider the compact set
\begin{equation}\label{gammst0}
  \Gamma_s^t=\Big\{y\in\pa\Om_s:y=\Big(1-\frac{s}t\Big)x+\frac{s}t\,z\quad\mbox{for some $z\in\pa\Om_t$, $x\in\pa\Om$}\Big\}\,.
\end{equation}
By definition, if $y\in\Gamma_s^t$, then there exist $x\in\pa\Om$ and $z\in\pa\Om_t$ such that
\begin{equation}
  \label{usala}
  B_{t-s}(z)\subset\Om_s\subset \R^{n+1}\setminus B_s(x)\,,\qquad \{y\}=\pa B_{t-s}(z)\cap\pa B_s(x)\,.
\end{equation}
In particular $x$ and $z$ are uniquely determined by the uniqueness of limits in $L^1_{{\rm loc}}$. Indeed, when $\rho\to0^+$,
\begin{equation}
  \label{omsyrho}
  \frac{\Om_s-y}{\rho}\to [x-z]^-\qquad\mbox{as characteristic functions in $L^1_{{\rm loc}}(\R^{n+1})$}
\end{equation}
where $[v]^-$ denotes the negative half-space defined by $v\ne 0$
\[
[v]^-=\Big\{w\in\R^{n+1}:w\cdot v<0\Big\}\,.
\]
Notice also that $\Lip(u;\R^{n+1})\le 1$ and the inclusion $B_{s+\e}(y-\e(x-z)/|x-z|)\subset\Om$ (which holds for $\e>0$ small since $t>s$) imply that $y$ has a unique projection onto the $\pa\Omega$. This shows that $u$ is differentiable at $y\in\Gamma_s^t$ with
\begin{equation}
  \label{N on Gammast}
  N(y)=-\frac{x-z}{|x-z|}\qquad\forall y=\Big(1-\frac{s}t\Big)x+\frac{s}t\,z\in\Gamma_s^t\,.
\end{equation}
In turn, \eqref{N on Gammast} gives
\begin{equation}
  \label{important}
  y+r\,N(y)\in\pa\Om_{s-r}\qquad\forall r\in[-s,t-s]\,,y\in\Gamma_s^t\,,
\end{equation}
By \eqref{important}, if $y,y'\in\Gamma_s^t$ then
\begin{eqnarray*}
s^2&\le&|y-s\,N(y)-y'|^2=s^2-2\,s\,N(y)\cdot(y-y')+|y-y'|^2\,,
\\
(t-s)^2&\le&|y+(t-s)\,N(y)-y'|^2=(t-s)^2+2\,(t-s)\,N(y)\cdot(y-y')+|y-y'|^2\,,
\end{eqnarray*}
that is
\begin{eqnarray}\label{C1 ext}
  |N(y)\cdot(y-y')|\le \max\Big\{\frac1s,\frac1{t-s}\Big\}\frac{|y-y'|^2}2\,,\qquad\forall y,y'\in\Gamma_s^t\,.
\end{eqnarray}
Using \eqref{N on Gammast} we easily see that $N$ is continuous on $\Gamma_s^t$, so that $(u,N)\in C^0(\Gamma_s^t;\R\times\R^{n+1})$ and satisfies \eqref{C1 ext}. By Whitney's extension theorem,  there exists $\phi\in C^1(\R^{n+1})$ such that $(\phi,\nabla\phi)=(u,N)$ on $\Gamma_s^t$. In particular, this implies the $\H^n$-rectifiability of $\Gamma_s^t$.

\bigskip

\noindent {\it Decomposition of $\Om$ and covering by $\zeta(Z)$}: We define
\begin{eqnarray}\label{omegastar0}
\Gamma_s^+=\bigcup_{t>s}\Gamma_s^t\,,
\qquad
\Om^\star=\bigcup_{s>0}\Gamma_s^+\subset\Om\,,
\qquad
Z=\Big\{(x,t)\in\pa^*\Om\times\R:0<t\le\frac1{\k_n(x)}\Big\}\,,
\end{eqnarray}
and set $\zeta(x,t)=x-t\,\nu_\Om(x)$. We claim that
\begin{equation}
  \label{omegastar proof}
 |\Om\setminus\Om^\star|=0\qquad |\Om^\star\setminus\zeta(Z)|=0\,.
\end{equation}
We divide the proof of \eqref{omegastar proof} into four steps.

\bigskip

\noindent {\it Step one}: We prove that $N$ is tangentially differentiable along $\Gamma_s^t$ at $\H^n$-a.e. $y\in\Gamma_s^t$, with
\begin{equation}
  \label{kst}
  \left\{
  \begin{split}
      &\nabla^{\Gamma_s^t}N(y)=-\sum_{i=1}^n(\k_s^t)_i(y)\,\tau_i(y)\otimes \tau_i(y)
      \\
      &-\frac1s\le(\k_s^t)_i(y)\le(\k_s^t)_{i+1}(y)\le\frac1{t-s}\,,
  \end{split}
  \right .
\end{equation}
where $\{\tau_i(y)\}_{i=1}^n$ is an orthonormal basis of $T_y\Gamma_s^t$. To this end, we first prove that $\Gamma_s^t$ can be covered by compact sets $\{\U_j\}_{j\in\N}$ in such a way that the restriction of $N$ to $\U_j$ is a Lipschitz map, that is
\begin{equation}
  \label{lip bound j}
  |N(y_1)-N(y_2)|\le C_j\,|y_1-y_2|\qquad\forall y_1,y_2\in\U_j\,.
\end{equation}
(In passing we notice that \eqref{lip bound j} implies the $C^{1,1}$-rectifiability of $\Gamma_s^+$, that is to say, the possibility of covering $\Gamma_s^+$ by graphs of $C^{1,1}$ functions from $\R^n$ to $\R^{n+1}$).

We start by defining the sets $\U_j$. Let us denote by
\[
\C(N,\rho)=\Big\{z+h\,N:z\in N^\perp\,,|z|<\rho\,,|h|<\rho\Big\}\,,
\]
the open cylinder centered at the origin with axis along $N\in\SS^n$, radius $\rho>0$, and height $2\rho$. Notice that, by the interior/exterior ball condition, $\Gamma_s^t$ admits an approximate tangent plane at $\H^n$-a.e. of its points, and this plane is then necessarily equal to $N(y)^\perp$, that is
\[
T_y\Gamma_s^t=N(y)^\perp\qquad\mbox{for $\H^n$-a.e. $y\in\Gamma_s^t$}\,.
\]
In particular \eqref{approximate tangent plane} implies
\[
\lim_{\rho\to 0^+}\frac{\H^n(\Gamma_s^t\cap\big(y+\C(N(y),\rho)\big))}{\rho^n}=\om_n\,,\qquad\mbox{for $\H^n$-a.e. $y\in\Gamma_s^t$}\,.
\]
By Egoroff's theorem, we can find compact sets $\U_j$ covering $\Gamma_s^t$ such that
\begin{equation}
  \label{mujstar}
  \mu_j^*(\rho)=\sup_{y\in\U_j}\Big|1-\frac{\H^n(\Gamma_s^t\cap\big(y+\C(N(y),\rho)\big))}{\om_n\,\rho^n}\Big|\to 0\qquad\mbox{as $\rho\to 0^+$}\,.
\end{equation}
Consider the function $\phi$ constructed in proving the $\H^n$-rectifiability of $\Gamma_s^t$. Since $\nabla\phi(y)=N(y)\ne 0$ at each $y\in\Gamma_s^t$, we can apply the implicit function theorem at $y$ and find that $\Gamma_s^t$ is a $C^1$-graph over a disk or radius $\rho_y$ in a neighborhood of $y$. We can thus pick any sequence $\rho_j\to 0^+$, and up to further subdivide $\U_j$ and relabel the resulting pieces, we can assume that each $\U_j$ has the following property: for each $y\in\U_j$ there exists
\begin{equation}
  \label{psij}
  \psi_j\in C^1(N(y)^\perp)\,,\quad
  \psi_j(0)=0\,,\quad \nabla\psi_j(0)=0\,,\quad \|\nabla\psi_j\|_{C^0(N(y)^\perp)}\le 1\,,
\end{equation}
such that, if
\begin{equation}
  \label{sigmaj'}
  \U_j'=\mbox{projection of $\U_j$ on $N(y)^\perp\cap\{|z|<\rho_j\}$},
\end{equation}
then
\begin{eqnarray}\label{sigma j graph}
\U_j\cap\Big(y+\C(N(y),\rho_j)\Big)&=&\Gamma_s^t\cap\Big(y+\C(N(y),\rho_j)\Big)
\\\nonumber
&=&y+\Big\{z+\psi_j(z)\,N(y):z\in\U_j'\Big\}\,.
\end{eqnarray}
(Notice that both $\psi_j$ and $\U_j'$ depend on the point $y\in\U_j$ at which we are considering the ``graphicality'' property of $\U_j$, but that this dependency is not stressed to simplify the notation.) If we set
\begin{equation}
  \label{mujrho}
  \mu_j(\rho)=\max\Big\{\mu_j^*(\rho),\max_{|z|\le \rho}|\nabla\psi_j(z)|\Big\}\qquad\rho\in(0,\rho_j]
\end{equation}
then $\mu_j(\rho)\to 0$ as $\rho\to 0^+$ by \eqref{mujstar} and by continuity of $\nabla\psi_j$. This completes the definition of the sets $\U_j$.

\medskip

We now prove \eqref{lip bound j}. Fix $y_1,y_2\in\U_j$. Let $\rho_j$ and $\psi_j$ be the functions associated to $\U_j$ and $y_2\in\U_j$ as we have just described. For $r_j<\rho_j/3$ to be chosen, we can directly assume that
\begin{equation}
  \label{y1}
  y_1\in y_2+\C(N(y_2),r_j)
\end{equation}
for otherwise $|y_1-y_2|\ge c(n)\,r_j$ and, trivially, $|N(y_1)-N(y_2)|\le 2\le C_j\,|y_1-y_2|$. Next we assume, as we can do without loss of generality up to a rigid motion, that
\[
y_2=(0,0)\in\R^n\times\R\,,\qquad N(y_2)=(0,1)\in\R^n\times\R\,,\qquad N(y_2)^\perp=\R^n\,.
\]
In this way \eqref{sigma j graph} takes the form
\begin{equation}\label{sigma j graph0}
\Big\{(z,h)\in\Gamma_s^t:|z|<\rho_j\,,|h|<\rho_j\Big\}=\Big\{(z,\psi_j(z)):z\in\U_j'\Big\}
\end{equation}
with
\begin{equation}
  \label{psijjj}
  \psi_j\in C^1(\R^n)\,,\quad \psi_j(0)=0\,,\quad \nabla\psi_j(0)=0\,,\quad \|\nabla\psi_j\|_{C^0(\R^n)}\le 1\,.
\end{equation}
By \eqref{y1}, $y_1=(z_1,\psi_j(z_1))$ for some $z_1\in\U_j'$ with $|z_1|< r_j$. By continuity of $N$ along $\Gamma_s^t$ and since $N(0)=(0,1)$ we find
\[
N(y_1)=\frac{(-\nabla\psi_j(z_1),1)}{\sqrt{1+|\nabla\psi_j(z_1)|^2}}\,.
\]
In particular,
\begin{eqnarray*}
  \frac{|N(y_1)-N(y_2)|^2}2=1-\frac1{\sqrt{1+|\nabla\psi_j(z_1)|^2}}
  \le\frac{|\nabla\psi_j(z_1)|^2}2\,,
\end{eqnarray*}
while at the same time $|y_1-y_2|^2=|z_1|^2+\psi_j(z_1)^2\ge|z_1|^2$. We are thus left to show
\begin{equation}
  \label{left to show}
  |\nabla\psi_j(z_1)|\le C_j\,|z_1|\,.
\end{equation}
To this end we would like to exploit \eqref{C1 ext} with $y=y_1$ and $y'=y_0$ where $y_0=(z_0,h_0)$ is defined, in terms of a suitable $e_0\in\SS^n$ (see \eqref{e0} below), as
\begin{equation}
  \label{z0t0}
  z_0=z_1-|z_1|\,e_0\qquad h_0=\psi_j(z_0)\,,
\end{equation}
Since $\Gamma_s^t$ may be very ``porous'', that is, its projection over $\{|z|<\rho_j\}$ could have lots of holes, it is not generally true that $y_0\in\Gamma_s^t$ and thus that $y'=y_0$ is an admissible choice in \eqref{C1 ext}. But when this is the case, by \eqref{C1 ext}
\begin{equation}
  \label{takes the form}
  C\,|y_1-y_0|^2\ge N(y_1)\cdot(y_1-y_0)=|z_1|\,\frac{\nabla\psi_j(z_1)\cdot (-e_0)}{\sqrt{1+|\nabla\psi_j(z_1)|^2}}
  +\frac{\psi_j(z_1)-\psi_j(z_0)}{\sqrt{1+|\nabla\psi_j(z_1)|^2}}\,.
\end{equation}
Now, in order to exploit \eqref{takes the form}, we notice that
\begin{equation}
  \label{notice}
  |\psi_j(z)|\le C\,|z|^2\qquad\mbox{$\forall |z|<\rho_j$ such that $(z,\psi_j(z))\in\Gamma_s^t$}\,,
\end{equation}
which is an immediate consequence of the fact that, around $(0,0)=(0,\psi_j(0))$, $\Gamma_s^t$ is trapped between two tangent balls (notice that we do not know this about the graph of $\psi_j$, and so we can apply \eqref{notice} only to the points of this graph that lie in $\Gamma_s^t$). Since $|z_0|\le 2\,|z_1|<2\,r_j<\rho_j$, still {\it assuming} that $y_0=(z_0,h_0)\in\Gamma_s^t$,  by \eqref{notice} we find that
\begin{eqnarray*}
|y_1-y_0|^2&=&|z_1|^2+(\psi_j(z_1)-\psi_j(z_0))^2\le C\,|z_1|^2
\\
\bigg|\frac{\psi_j(z_1)-\psi_j(z_0)}{\sqrt{1+|\nabla\psi_j(z_1)|^2}}\bigg|&\le&
|\psi_j(z_1)|+|\psi_j(z_0)|\le C\,|z_1|^2\,,
\end{eqnarray*}
and thus \eqref{takes the form} takes the form
\begin{equation}
  \label{takes the form 2}
  C\,|z_1|^2\ge |z_1|\,\frac{\nabla\psi_j(z_1)\cdot (-e_0)}{\sqrt{1+|\nabla\psi_j(z_1)|^2}}\,.
\end{equation}
Our choice of $e_0$ is thus clear, we want
\begin{equation}
  \label{e0}
  e_0=-\frac{\nabla\psi_j(z_1)}{|\nabla\psi_j(z_1)|}\,,
\end{equation}
to have a chance of proving \eqref{left to show}.

We are now ready to prove \eqref{left to show}. Set $y_0=(z_0,h_0)$ for $e_0$ as in \eqref{e0} and $z_0$ and $h_0$ as in \eqref{z0t0}. If $z_0\in\U_j'$, and thus $y_0\in\Gamma_s^t$, then, as explained, we are done. Otherwise, let $\e_0$ be the largest $\e>0$ such that
\[
\{|z-z_0|<\e\}\cap\U_j'=\emptyset\,.
\]
Since $z_1\in\U_j'$ and $|z_0-z_1|=|z_1|$, we have that $\e_0\le|z_1|$. In particular, since $|z_0|\le2|z_1|$, the ball $\{|z-z_0|<\e_0\}$ is contained in $\{|z|<3\,|z_1|\}\subset\{|z|<\rho_j\}$ thanks to $3r_j<\rho_j$. By definition of $\e_0$, there exists $z_*\in\U_j'$ with $|z_*-z_0|=\e_0$ and
\begin{eqnarray}\label{z0zstar}
\om_n\,|z_0-z_*|^n&=&\H^n\big(\{|z-z_0|<\e_0\}\big)
\\\nonumber
&\le&\H^n\big(\{|z|<3|z_1|\}\setminus\U_j'\big)
=\om_n\,(3|z_1|)^n-\H^n\big(\U_j'\cap \{|z|<3|z_1|\}\big)\,.
\end{eqnarray}
On the one hand, since $\U_j$ is the graph of the Lipschitz function $\psi_j$ over $\U_j'$
\begin{eqnarray*}
  \H^n\big(\U_j'\cap \{|z|<3|z_1|\}\big)&\le& \int_{\U_j'\cap \{|z|<3|z_1|\}}
  \sqrt{1+|\nabla \psi_j|^2}=\H^n\big(\U_j\cap\C(N(y_2),3|z_1|)\big)
  \\
  &=&\H^n\big(\Gamma_s^t\cap\C(N(y_2),3|z_1|)\big)
  \\
  &\le&\om_n\,(3|z_1|)^n\Big(1+\mu_j(3\,|z_1|)\Big)
\end{eqnarray*}
thanks to \eqref{mujrho}; on the other hand, again by the definition \eqref{mujrho} of $\mu_j$,
\begin{eqnarray*}
  \H^n\big(\U_j'\cap \{|z|<3|z_1|\}\big)&=&\int_{\U_j'\cap \{|z|<3|z_1|\}}\frac{\sqrt{1+|\nabla \psi_j|^2}}{\sqrt{1+|\nabla \psi_j|^2}}
  \\
  &\ge&\frac{\H^n\big(\Gamma_s^t\cap\C(N(y_2),3|z_1|)\big)}{\sqrt{1+\mu_h(3\,|z_1|)^2}}
  \\
  &\ge&\frac{1-\mu_j(3\,|z_1|)}{\sqrt{1+\mu_h(3\,|z_1|)^2}}\,\om_n\,(3|z_1|)^n\,.
\end{eqnarray*}
Combining the last two estimates into \eqref{z0zstar} we find
\[
\om_n\,|z_0-z_*|^n\le C\,\mu_j(3|z_1|)\,\om_n\,(3|z_1|)^n\,,
\]
that is
\begin{equation}
  \label{superfico}
  |z_0-z_*|\le C\,\mu_j(3\,|z_1|)^{1/n}\,|z_1|\,.
\end{equation}
In other words, after scaling out $|z_1|$, the best point we can use, $z_*$, is as close as we want to the point we would like to use $z_0$. We conclude the argument setting $y_*=(z_*,\psi_j(z_*))$. Since $z_*\in\U_j'$, we have $y_*\in\Gamma_s^t$. We can apply \eqref{C1 ext} with $y=y_1=(z_1,\psi_j(z_1))$ and $y'=y_*$, to find
\begin{eqnarray}\nonumber
C\,|y_1-y_*|^2&\ge&N(y_1)\cdot (y_1-y_*)\ge \frac{(-\nabla\psi_j(z_1))\cdot(z_1-z_*)}{\sqrt{1+|\nabla\psi_j(z_1)|^2}}
+\frac{\psi_j(z_1)-\psi_j(z_*)}{\sqrt{1+|\nabla\psi_j(z_1)|^2}}
\\\nonumber
&\ge& \frac{(-\nabla\psi_j(z_1))\cdot(z_1-z_*)}{\sqrt{1+|\nabla\psi_j(z_1)|^2}}
-C\,\Big(|z_1|^2+|z_*|^2\Big)
\\\label{perfect}
&\ge& |\nabla\psi_j(z_1)|\Big(1-C\,\mu_j(3\,|z_1|)^{1/n}\Big)\,\frac{|z_1|}{C}
-C\,\Big(|z_1|^2+|z_*|^2\Big)
\end{eqnarray}
where we have first applied \eqref{notice} to $z_1$ and $z_*$, and then have decomposed $z_1-z_*$ as the sum of $z_1-z_0=e_0\,|z_1|$ and of $z_0-z_*$, have recalled the definition of $e_0$, and have used \eqref{superfico}. Similarly,
\begin{eqnarray*}
|y_1-y_*|&\le&|z_1-z_*|+|\psi_j(z_1)-\psi_j(z_*)|\le |z_1-z_0|+|z_0-z_*|+ C\,\Big(|z_1|^2+|z_*|^2\Big)
\\
&\le&C\,|z_1|\,,
\end{eqnarray*}
and thus \eqref{perfect} implies \eqref{left to show}. This concludes the proof of \eqref{lip bound j}. We now prove \eqref{kst}.

\medskip

As noticed in section \ref{section varifold}, since $N$ is a Lipschitz function on each $\U_j$, and since the $\U_j$ are covering $\Gamma_s^t$, we deduce that $N$ is tangentially differentiable along $\Gamma_s^t$, and that its tangential gradient along $\Gamma_s^t$ can be computed by looking at any Lipschitz extension of $N$ to $\R^{n+1}$. Moreover, by \eqref{locality tangent plane}, it is enough to work with $\U_j$ in place of $\Gamma_s^t$.

\medskip

To construct a convenient extension of $N$ we go back to the proof of the $\H^n$-rectifiability of $\Gamma_s^t$, and this time we construct $\phi\in C^{1,1}(\R^{n+1})$ such that $(u,N)=(\phi,\nabla\phi)$ on $\U_j$ by taking \eqref{C1 ext} and \eqref{lip bound j} into account. Then we can go back to the construction of the sets $\U_j$, and apply the $C^{1,1}$-implicit function theorem to deduce that for each $y\in\U_j$ there exists
\[
\psi_j\in C^{1,1}(N(y)^\perp)\,,
\]
satisfying \eqref{psij} and \eqref{sigma j graph}. In particular, we can consider the Lipschitz extension $N_*$ of $N$ from $\U_j\cap(y+\C(N(y),\rho_j))$ to $y+\C(N(y),\rho_j)$ given by
\[
N_*(y+z+h\,N(y))=\frac{-\nabla\psi_j(z)+N(y)}{\sqrt{1+|\nabla\psi_j(z)|^2}}\,,\qquad\forall z\in N(y)^\perp\,, |z|<\rho_j\,,|h|<\rho_j\,.
\]
Setting $\Psi_j(z)=y+z+\psi_j(z)\,N(y)$ for $|z|<\rho_j$, by \eqref{comp} we have that for $\H^n$-a.e. $y'\in\U_j$,
\[
\big(\nabla^{\U_j}N\big)_{y'}[\tau]=\nabla(N_*\circ\Psi_j)_{\Psi_j^{-1}(y')}\,[e]
\]
where $\tau\in T_{y'}\U_j$ and $e=(\nabla\Psi_j)_{\Psi_j^{-1}(y')}[\tau]\in\R^n$. When $\psi_j\in C^2(N(y)^\perp)$, a classical computation shows that
\[
\nabla(N_*\circ\Psi_j)_z\,[e]=A_j(\Psi_j(z))[\tau]
\]
where $A_j$ denote the second fundamental form to the graph of $\psi_j$, which is symmetric thanks to commutativity property of the second derivatives of $\psi$; and where the eigenvalues of $A_j$ are bounded from below by $-1/s$ and from above by $1/(t-s)$ thanks to $\U_j\subset\Gamma_s^t$. In our case the same computations holds for a.e. $|z|<\rho_j$ by the chain rule for Lipschitz functions, where the symmetry of $A_j$ is guaranteed by the fact that $\nabla^2\psi_j$ is both a distributional gradient and an a.e. classical differential of $\nabla\psi_j$. Finally, the a.e.-pointwise estimates on the eigenvalues are deduced a.e. on $\U_j'$ thanks to the fact that $\nabla^2\psi_j$ is an a.e. classical differential. This proves \eqref{kst}.

\bigskip

\noindent {\it Step two}: We claim that for every $t>s>0$ we have
\begin{equation}
  \label{proj pp}
  \H^n(\pa\Om_t)\le \big(t/s\big)^n\,\H^n(\Gamma_s^t)\,,
\end{equation}
and then use \eqref{proj pp} to prove
\begin{equation}\label{omega omega star 0}
|\Om\Delta\Om^\star|=0\,.
\end{equation}
Indeed, for $r\in[-s,t-s]$ let us consider the map
\begin{equation}
  \label{fr}
  f_r:\Gamma_s^t\to\pa\Om_{s+r}\qquad f_r(y)=y+r\,N(y)\qquad y\in\Gamma_s^t\,.
\end{equation}
The fact that $f_r(y)\in\pa\Om_{s+r}$ is immediate as every $y\in\Gamma_s^t$ has the form $y=(1-(s/t))x+(s/t)z$ for $x\in\pa\Om$, $z\in\pa\Om_t$. Notice that, again by definition of $\Gamma_s^t$, $f_{t-s}$ is surjective, that is $\pa\Om_t=f_{t-s}(\Gamma_s^t)$. Thus
\begin{eqnarray*}
  \H^n(\pa\Om_t)=\H^n(f_{t-s}(\Gamma_s^t))\le\int_{f_{t-s}(\Gamma_s^t)}\H^0(f_{t-s}^{-1}(z))\,d\H^n_z=\int_{\Gamma_s^t}\,J^{\Gamma_s^t}f_{t-s}\,d\H^n
\end{eqnarray*}
where by \eqref{kst}, and in particular by the lower bound on $(\k_s^t)_i$,
\[
J^{\Gamma_s^t}f_{t-s}=\prod_{i=1}^n\Big(1-(t-s)\,(\k_s^t)_i\Big)\le  \bigg(1+\frac{t-s}{s} \bigg)^n\qquad\mbox{$\H^n$-a.e. on $\Gamma_s^t$}\,.
\]
This proves \eqref{proj pp}. To prove \eqref{omega omega star 0}, we first apply the coarea formula \eqref{coarea} to find
\begin{equation}
  \label{elegant}
  |\Om\Delta\Om^\star|=\int_0^\infty\,\H^n\Big((\Om\Delta\Om^\star)\cap\pa\Om_s\Big)\,ds=
\int_0^\infty\,\H^n\big(\pa\Om_s\setminus\Gamma_s^+\big)\,ds\,,
\end{equation}
where $\Gamma_s^+\subset\pa\Om_s$. Again by the coarea formula, for a.e. $s>0$,
\[
\H^n(\pa\Om_s)=\lim_{\e\to0}\frac{|\Om_s|-|\Om_{s+\e}|}{\e}=\lim_{\e\to 0^+}\frac1\e\int_0^\e\H^n(\pa\Om_{s+r})\,dr\,.
\]
where by \eqref{proj pp}
\[
\frac1\e\int_0^\e\H^n(\pa\Om_{s+r})\,dr\le \frac1\e\int_0^\e \bigg(1+\frac{r}{s} \bigg)^n\H^n(\Gamma_s^{s+r})\,dr\le\,
\bigg(1+\frac{\epsilon}{s} \bigg)^n\H^n(\Gamma_s^+)\,.
\]
Since $\Gamma_s^+\subset\pa\Om_s$, this proves
\begin{equation}
  \label{cool}
  \H^n(\Gamma_s^+)=\H^n(\pa\Om_s)\qquad\mbox{for a.e. $s>0$}\,,
\end{equation}
which, combined with \eqref{elegant} gives in turn \eqref{omega omega star 0}.

\bigskip

\noindent {\it Step three}: For $r\in(0,s)$, let us consider the map
\[
g_r:\Gamma_s^+\to\Gamma_{s-r}^+\,,\qquad g_r(y)=y-r\,N(y)\qquad y\in\Gamma_s^+\,,
\]
which is (clearly) a bijection between $\Gamma_s^t$ and $\Gamma_{s-r}^t$ for each $t>0$. We claim that if $y$ is a point of tangential differentiability of $N$ along $\Gamma_s^t$, then $g_r(y)$ is a point of tangential differentiability of $N$ along $\Gamma_{s-r}^t$, and
\begin{equation}
  \label{formula curvatures}
  (\k_{s-r}^t)_i(g_r(y))=\frac{(\k_s^t)_i(y)}{1+r\,(\k_s^t)_i(y)}\,,\qquad\forall i=1,...,n\,.
\end{equation}
Indeed, it is easily seen that
\begin{equation}
  \label{important2}
  N(y)=N(g_r(y))=N(y-r\,N(y))\qquad \forall y\in\Gamma_s^t\,,
\end{equation}
so that if $y$ is a point of tangential differentiability of $N$ along $\Gamma_s^t$ and $\tau\in T_y\Gamma_s^t$, then $\tau\in T_{g_r(y)}\Gamma_s^t$ and
\[
(\nabla^{\Gamma_s^t}N)_y[\tau]=\big(\nabla^{\Gamma_{s-r}^t}N\big)_{g_r(y)}\Big[\tau-r\,(\nabla^{\Gamma_s^t}N)_y[\tau]\Big]\,.
\]
Plugging in $\tau=\tau_i(y)$ as in \eqref{kst} we find
\[
-(\k_s^t)_i(y)\tau_i(y)=\big(1+r\,(\k_s^t)_i(y)\big)\,\big(\nabla^{\Gamma_{s-r}^t}N\big)_{g_r(y)}[\tau_i(y)]
\]
that is
\[
-\tau_i(y)\cdot\big(\nabla^{\Gamma_{s-r}^t}N\big)_{g_r(y)}[\tau_i(y)]=\frac{(\k_s^t)_i(y)}{1+r\,(\k_s^t)_i(y)}\,.
\]
Thus $\{\tau_i(y)\}_{i=1}^n$ is an orthonormal basis for $T_{g_r(y)}\Gamma_{s-r}^t=T_y\Gamma_s^t$ made up of eigenvalues of $\nabla^{\Gamma_{s-r}^t}N(g_r(y))$, and the last formula is just \eqref{formula curvatures}.

\bigskip

\noindent {\it Step four}: We prove that
\begin{equation}
  \label{omegasta meno zetazeta}
  |\Om^\star\setminus\zeta(Z)|=0\,.
\end{equation}
By the coarea formula \eqref{coarea} and by \eqref{cool}
\begin{eqnarray*}
|\Om^\star\setminus\zeta(Z)|&=&\int_0^\infty\H^n\Big((\Om^\star\setminus\zeta(Z))\cap\pa\Om_s\Big)\,ds
=\int_0^\infty\H^n\Big((\Om^\star\setminus\zeta(Z))\cap\Gamma_s^+\Big)\,ds
\\
&=&\int_0^\infty\H^n\Big(\Gamma_s^+\setminus\zeta(Z)\Big)\,ds\,.
\end{eqnarray*}
Since $x\in\pa^*\Om$ and $y\in\Gamma_s^+$ are such that $y=x-s\,\nu_\Om(x)$ if and only if $x=y-s\,N(y)=g_s(y)$, with $g_s$ as in step three, we have that
\[
\zeta(Z)\cap\Gamma_s^+=g_s^{-1}(\pa^*\Om)\,,\qquad\forall s>0\,.
\]
Taking into account that $\pa\Om\setminus\pa^*\Om=\Sigma$ (recall Lemma \ref{lemma prep}) and that $g_s^{-1}(\pa\Om)\subset\Gamma_s^+$, in order to prove \eqref{omegasta meno zetazeta} we are left to show that for a.e. $s>0$
\begin{equation}
  \label{reformulated}
  \H^n\big(g_s^{-1}(\Sigma)\big)=0\,.
\end{equation}
In other words, the points in $\Gamma_s^+$ that, projected over $\pa\Om$, end up on the singular set, have negligible $\H^n$-measure. We are actually going to show that \eqref{reformulated} holds for every $s>0$ such that $\H^n(\Gamma_s^+)=\H^n(\pa\Om_s)$. We shall argue by contradiction, assuming that $\H^n(\Gamma_s^+)=\H^n(\pa\Om_s)$ and
\[
\H^n\big(g_s^{-1}(\Sigma)\big)>0\,.
\]
In particular, there exists $t>s$, such that $\H^n(\Gamma_s^t\cap g_s^{-1}(\Sigma))>0$.

As a preliminary step to derive a contradiction we first notice that
\begin{equation}\label{h00}
\H^0(g_s^{-1}(x))\le 2\qquad\forall x\in\pa\Om\,.
\end{equation}
Otherwise, $g_s^{-1}(x)$ would contain at least two points $y_1$ and $y_2$ such that $(x-y_1)/|x-y_1|$ and $(x-y_2)/|x-y_2|$ are not antipodal. Any blow-up of $\var(\pa\Om,x)$ would then be a stationary varifold contained in the intersection of two non-opposite half-spaces, a contradiction to Lemma \ref{lemma cone stationary varifold}. By \eqref{h00} and by $\H^n(\Sigma)=0$ (recall \eqref{omegas meno omegas}) we find that
\[
0=2\,\H^n(\Sigma)\ge\int_\Sigma\H^0(g_s^{-1}(x))\,d\H^n=\int_{g_s^{-1}(\Sigma)}J^{\Gamma_s^t}g_s\,d\H^n\,,
\]
where
\[
J^{\Gamma_s^t}g_s=\prod_{i=1}^n(1+s(\k_s^t)_i)\ge0\qquad\mbox{on $\Gamma_s^t$}
\]
thanks to $-1/s\le(\k_s^t)_i$, see \eqref{kst}. Having assumed $\H^n(g_s^{-1}(\Sigma))>0$, and since $\{(\k_s^t)_i\}_i$ are ordered increasingly on $i$, we deduce in particular that
\begin{equation}
  \label{imply}
\H^n\Big(\Big\{y\in\Gamma_s^t:(\k_s^t)_{1}(y)=-\frac1s\Big\}\Big)\ge\H^n(\Gamma_s^t\cap g_s^{-1}(\Sigma))>0\,.
\end{equation}
By \eqref{formula curvatures} we see that
\[
  \Big\{\tilde{y}\in\Gamma_{s-r}^t:(\k_{s-r}^t)_{1}(\tilde{y})=-\frac1{s-r}\Big\}=g_r\Big(\Big\{y\in\Gamma_s^t:(\k_s^t)_{1}(y)=-\frac1s\Big\}\Big)\,.
\]
Since $g_r:\Gamma_s^t\to\Gamma_{s-r}^t$ is injective, by the area formula
\begin{eqnarray*}
\H^n\Big(\Big\{\tilde{y}\in\Gamma_{s-r}^t:(\k_{s-r}^t)_{1}(\tilde{y})=-\frac1{s-r}\Big\}\Big)
=\int_{\{y\in\Gamma_s^t:(\k_s^t)_{1}(y)=-1/s\}}J^{\Gamma_s^t}g_r\,d\H^n\,.
\end{eqnarray*}
Using again that $(\k_s^t)_i\ge-1/s$ on $\Gamma_s^t$, we have
\[
J^{\Gamma_s^t}g_r=\prod_{i=1}^n(1+r\,(\k_s^t)_i)\ge\Big(1-\frac{r}s\Big)^n>0\qquad\forall r\in(0,s)\,,
\]
so that \eqref{imply} implies that for every $r\in(0,s)$
\begin{equation}
  \label{implies}
  \H^n(\Lambda_{s-r}^t)>0\qquad\mbox{for}\,\,\Lambda_{s-r}^t=\Big\{\tilde{y}\in\Gamma_{s-r}^t:(\k_{s-r}^t)_{1}(\tilde{y})=-\frac1{s-r}\Big\}\,.
\end{equation}
By using \eqref{formula curvatures} and the fact that $a\mapsto a/(1+r\,a)$ is increasing on $a\ge0$, we see that for every $\tilde{y}\in\Lambda_{s-r}^t$, $\tilde{y}=g_r(y)$, we have
\begin{eqnarray}\nonumber
\sum_{i=1}^n(\k_{s-r}^t)_i(\tilde{y})&=&-\frac1{s-r}+\sum_{i=2}^n\frac{(\k_{s}^t)_i(y)}{1+r\,(\k_{s}^t)_i(y)}
\\\label{hey}
&\le&-\frac1{s-r}+(n-1)\frac{1/(t-s)}{1+(r/(t-s))}\le 0\,,
\end{eqnarray}
provided $r\in(r_0,s)$ for $r_0=r_0(s,t)$ suitably close to $s$, depending on $s$ and $t$. Here the choice of $0$ on the right-hand side of \eqref{hey} is arbitrary. Any constant strictly less than $n$ would suffice for the rest of the argument.

Now consider the set
\[
\Lambda=\bigcup_{r_0<r<s}\Lambda_{s-r}^t
\]
so that by the coarea formula and \eqref{implies}
\[
|\Lambda|=\int_{r_0}^s\,\H^n\big(\Lambda\cap\pa\Om_{s-r}\big)\,dr=\int_{r_0}^s\,\H^n\big(\Lambda_{s-r}^t\big)\,dr>0
\]
By the a.e. second order differentiability of $u$, there exists $y_0\in \Lambda$ such that $u$ admits a second order Taylor expansion at $y_0$. Moreover there exists $r\in(r_0,s)$ such that $y_0\in\Lambda_{s-r}^t\subset\Gamma_{s-r}^t$, so that $\nabla^2u(y_0)[N(y_0)]=0$ by \eqref{important2}, and thus
\begin{equation}
  \label{expression of u}
  \nabla^2u(y_0)=\nabla^{\Gamma_{s-r}^t}N(y_0)=-\sum_{i=1}^n(\k_{s-r}^t)_i(y_0)\,\tau_i(y_0)\otimes \tau_i(y_0)\,,
\end{equation}
thanks to \eqref{kst}. Moreover, by \eqref{hey}, we definitely have
\begin{equation}
  \label{star}
  \sum_{i=1}^n(\k_{s-r}^t)_i(y_0)\le 0\,.
\end{equation}
Let us now set $\nu=-N(y_0)$ and
\[
\D_\rho=\big\{z\in\nu^\perp:|z|<\rho\big\}\qquad\C_\rho=\big\{z+h\,\nu:z\in\D_\rho\,,|h|<\rho\big\}\qquad\rho>0\,.
\]
For every $\e>0$, the second order differentiability of $u$ at $y_0$, \eqref{star} and \eqref{expression of u} imply the existence of $\rho>0$ and of a second order polynomial $\eta:\nu^\perp\equiv\R^n\to\R$ such that $\eta(0)=0$, $\nabla\eta(0)=0$,
\begin{equation}
  \label{xi prop}
  -\Div\Big(\frac{\nabla\eta}{\sqrt{1+|\nabla\eta|^2}}\Big)(z)
  \le
  -\Div\Big(\frac{\nabla\eta}{\sqrt{1+|\nabla\eta|^2}}\Big)(0)+\e\le\sum_{i=1}^n(\k_{s-r}^t)_i(y_0)+2\e\le2\e\,,
\end{equation}
for every $z\in\D_\rho$ and
\begin{equation}
  \label{om'epigraph}
  y_0+\Big\{z+h\,\nu:z\in\D_\rho\,,-\rho<h<\eta(z)\big\}\subset (y_0+\C_\rho)\cap\Om_{s-r}\,.
\end{equation}
If we translate $\Om$ by $(s-r)\,N(y_0)$, then
\[
\Om_{s-r}\subset \Big(\Om+(s-r)\,N(y_0)\Big)
\quad\mbox{with}\quad y_0\in\pa\Om_{s-r}\cap\pa\Big(\Om+(s-r)\,N(y_0)\Big).
\]
We are now in the position to apply Theorem \ref{thm mp} with
\[
M=\pa\Big(\Om+(s-r)\,N(y_0)-y_0\Big)\,,
\]
$\nu=-N(y_0)$, $U=\D_\rho$, $z_0=0$, $h_0=\nu\cdot y_0-\rho$ and $\eta$ as in \eqref{xi prop}. Indeed by \eqref{om'epigraph} we have that if we set
\[
\vphi(z)=\inf\Big\{h\in(h_0,\infty):z+h\,\nu\in M\Big\}\qquad z\in\D_\rho\,,
\]
then $\infty>\vphi\ge\eta>h_0$ on $\D_\rho$ as well as $\vphi(0)=\eta(0)=0$. However, by \eqref{xi prop},
\[
2\e\ge-\Div\Big(\frac{\nabla\eta}{\sqrt{1+|\nabla\eta|^2}}\Big)(z)\qquad\forall z\in\D_\rho
\]
while by the constant mean curvature condition $n=H_\Om^0=\HH_{\pa\Om}\cdot\nu_\Om$ on $\pa^*\Om$ we have
\[
n=\HH_M(z+\vphi(z)\nu)\cdot\,\frac{-\nabla \vphi(z)+\nu}{\sqrt{1+|\nabla \vphi(z)|^2}}\qquad\mbox{for a.e. $z\in\D_\rho$}\,.
\]
This is a contradiction to Theorem \ref{thm mp}, hence we obtain \eqref{omegasta meno zetazeta}.

\bigskip

\noindent {\it Conclusion of the proof}: Having proved \eqref{omegasta meno zetazeta}, we can now apply the Montiel-Ros argument.  By \eqref{omega omega star 0} and \eqref{omegasta meno zetazeta},
\[
|\Om|=|\Om^\star|\le|\zeta(Z)|\le\int_Z\,\H^0(\zeta^{-1}(y))\,dy=\int_{\pa^*\Om}d\H^n_x\int_0^{1/\k_n(x)}\prod_{i=1}^n(1-t\k_i(x))\,dt\,,
\]
where $Z=\{(x,t)\in\pa^*\Om\times\R:0<t\le1/\k_n(x)\}$ and $\zeta(x,t)=x-t\,\nu_\Om(x)$. Here we have used the fact that $Z$ is a locally $\H^{n-1}$-rectifiable set in $\R^{n+1}\times\R$ with
 \begin{equation}
    \label{anis federer}
      \H^{n+1}\llcorner\Big((\pa^*\Om)\times\R\Big)=\Big(\H^n\llcorner\pa^*\Om\Big)\times\H^1\,,
 \end{equation}
 see \cite[Exercise 18.10]{maggiBOOK}, and that $J^Z\zeta=\prod_{i=1}^n(1-t\,\k_i)$. By the arithmetic-geometric mean inequality and by $\k_n\ge H_\Om^0/n$, arguing as in \eqref{same as} we thus find
\begin{eqnarray*}
\int_{\pa^*\Om}d\H^n_x\int_0^{1/\k_n(x)}\prod_{i=1}^n(1-t\k_i(x))\,dt
&\le&\int_{\pa\Om}d\H^n_x\int_0^{1/\k_n(x)}\Big(\frac1n\sum_{i=1}^n(1-t\,\k_i(x)\Big)^n\,dt
\\\nonumber
&\le&\int_{\pa\Om}d\H^n_x\int_0^{n/H_\Om^0}\Big(1-t\,H_\Om^0\Big)^n\,dt
\\
&=&\frac{n}{n+1}\,\int_{\pa\Om}\frac{d\H^n}{H_\Om^0}=|\Om|\,,
\end{eqnarray*}
so that equalities hold everywhere and
\begin{eqnarray}
    \label{condition1}
    \Big|\zeta({Z})\setminus\Om\Big|=0\,,&&
    \\
    \label{condition2}
    \H^0(\zeta^{-1}(y))=1\,,&&\qquad\mbox{for a.e. $y\in\Om$}\,,
    \\
    \label{condition3}
    \k_i(x)=\frac{H_\Om^0}n\,,&&\qquad\mbox{for every $x\in\pa^*\Om$, $i=1,...,n$}\,.
\end{eqnarray}
Recall that we have rescaled $\Om$ so that $H^0_\Om=n$. By \eqref{condition3}, since $\pa^*\Om$ is relatively open in $\pa\Om$, we can find a family $\{S_i\}_{i\in I}$, $I\subset\N$, of mutually disjoint subsets of $\pa^*\Om$ with $S_i\subset\pa B_1(x_i)$ for points $x_i\in\R^{n+1}$ such that
  \begin{equation}
    \label{partition omega star}
      \pa^*\Om=\bigcup_{i\in I}S_i\,,\qquad\mbox{$S_i$ is relatively open in $\pa\Om$}\,,\qquad\mbox{$S_i$ is connected}\,.
  \end{equation}
  Because $S_i\subset\pa\Omega$, we know that  $u(x_i)\le 1$.

  We claim that $u(x_i)=1$ for every $i\in I$. Indeed if $\de>0$ and $i\in I$ are such that $u(x_i)=1-4\delta$, then $B_\delta(x_i)\cap A_i\subset \Om$, where $A_i=\zeta(S_i\times(0,1))$ is an open subset of $\Om$. For any $y\in B_\delta(x_i)\cap A_i$, the triangle inequality implies  $u(y)<1-3\delta$, while clearly $d(y,S_i)\ge d(y,\partial B_1(x_i))\ge 1-\delta$. In particular, if $x\in\pa\Om$ is such that $|x-y|=u(y)$, then $x\not\in S_i$. Since \eqref{omega omega star 0} and \eqref{omegasta meno zetazeta} imply that for a.e. $y\in\Om$ there exist $x\in\pa^*\Om$ such that $|x-y|=u(y)$, we conclude from \eqref{partition omega star} that for a.e. $y\in B_\de(x_i)\cap A_i$ there exist $j\ne i$ and $x\in S_j$ such that $|x-y|=u(y)$; in particular, $B_\de(x_i)\cap A_i\cap A_j$ is non-empty, and since it is an open set, we have
  \[
  0<|B_\de(x_i)\cap A_i\cap A_j|\qquad\mbox{where, if $i\ne j$,}\,\, A_i\cap A_j\subset\big\{y\in\Om:\H^0(\zeta^{-1}(y))\ge 2\big\}\,.
  \]
  This is a contradiction to \eqref{condition2}. Thus $u(x_i)=1$ for every $i\in I$.

  Now let $T_i$ denote the closure of $S_i$ in $\pa B_1(x_i)$. Since $u(x_i)=1$ for every $i\in I$, we can apply Theorem \ref{thm mp} to $M=\pa\Om$ at each $x\in T_i$ to find $\rho_x>0$ such that $\pa\Om\cap B_{\rho_x}(x)=\pa B_1(x_i)\cap B_{\rho_x}(x)$. This in turn proves that $T_i=\pa B_1(x_i)$, and thus that $\pa B_1(x_i)\subset\pa\Om$ for every $i\in I$.

  Since $\H^n(\pa B_1(x)\cap\pa B_1(y))=0$ unless $x=y$, $P(\Om)<\infty$ implies that $I$ is finite. Since $\pa^*\Om$ is covered by the $S_i$, $\Om$ is the finite union of the balls $B_1(x_i)$, and owing to $\pa B_1(x_i)\subset\pa\Om$, this balls must be disjoint (their closures can of course intersect). This completes the proof of Theorem \ref{thm main}.
\end{proof}

\begin{proof}[Proof of Corollary \ref{corollary main}]
  Condition \eqref{cor hp 1} implies that the vector valued Radon measures
  \[
  \mu_{\Om_j}=\nu_{\Om_j}\,\H^n\llcorner\pa^*\Om_j
  \]
  converge in weak-star sense to $\mu_\Om$ with $|\mu_{\Om_j}|\weakstar|\mu_\Om|$ on $\R^{n+1}$. By Reshetnyak continuity theorem \cite[Theorem 2.39]{AFP}
  \[
  \lim_{j\to\infty}\int_{\R^{n+1}}\,\Phi\Big(x,\frac{d\,\mu_{\Om_j}}{d|\mu_{\Om_j}|}(x)\Big)\,d|\mu_{\Om_j}|
  =\int_{\R^{n+1}}\,\Phi\Big(x,\frac{d\,\mu_{\Om}}{d|\mu_{\Om}|}(x)\Big)\,d|\mu_{\Om}|
  \]
  whenever $\Phi\in C^0_c(\R^{n+1}\times\SS^n)$. Given $X\in C^1_c(\R^{n+1};\R^{n+1})$,
  \[
  \Phi(x,\nu)=\Div X(x)-\nu\cdot\nabla X(x)[\nu]\qquad (x,\nu)\in\R^{n+1}\times\SS^n
  \]
  belongs to $C^0_c(\R^{n+1}\times\SS^n)$ and thus we find
  \[
  \lim_{j\to\infty}\int_{\pa^*\Om_j}\Div^{\pa^*\Om_j}X\,d\H^n=\int_{\pa^*\Om}\Div^{\pa^*\Om}X\,d\H^n\,.
  \]
  By \eqref{cor hp 2} and by $\mu_{\Om_j}\weakstar\mu_\Om$
  \[
   \lim_{j\to\infty}\int_{\pa^*\Om_j}\Div^{\pa^*\Om_j}X\,d\H^n=\l\lim_{j\to\infty}\int_{\pa^*\Om_j}X\cdot\nu_{\Om_j}\,d\H^n=
   \l\,\int_{\pa^*\Om}X\cdot\nu_\Om\,d\H^n\,.
  \]
  We have thus proved that $\Om$ is a set of finite perimeter, finite volume and constant distributional mean curvature. We conclude by Theorem \ref{thm main}.
\end{proof}

\section{The Heintze-Karcher inequality for sets of finite perimeter}\label{section hk for sofp} The proof of Theorem \ref{thm main} also shows that the Heintze-Karcher inequality can be generalized to sets of finite perimeter. In this section we explain how this is done. As usual, set $u(y)=\dist(y,\pa\Om)$ for $y\in\Om$.

\begin{lemma}
  \label{lemma sofps}
  If $\Om$ is an open set with finite perimeter and finite volume in $\R^{n+1}$, then $\Om_s=\{y\in\Om:u(y)>s\}$ is an open set of finite perimeter with $\H^n(\pa\Om_s\setminus\Gamma_s^+)=0$ for a.e. $s>0$, where $\Gamma_s^+=\bigcup_{t>0}\Gamma_s^t$ and $\Gamma_s^t$ is defined as in \eqref{gammst}. Moreover:

  \medskip

  \noindent (i) For every $s>0$, $\Gamma_s^+$ can be covered by countably many graphs of $C^{1,1}$-functions from $\R^n$ to $\R^{n+1}$.

  \medskip

  \noindent (ii)  For every $s>0$, the principal curvatures $(\k_s)_i$ of $\Gamma_s^+$ are defined $\H^n$-a.e. on $\Gamma_s^+$ by setting
  \begin{eqnarray*}
  (\k_s)_i=(\k_s^t)_i\qquad\mbox{on $\Gamma_s^t$ for each $t>s$}\,,
  \end{eqnarray*}
  for $(\k_s^t)_i$ as in \eqref{kst}. Correspondingly, $\H^n$-a.e. on $\Gamma_s^+$ we can define
  \[
  H_{\Om_s}=\sum_{i=1}^n(\k_s)_i \qquad |A_{\Om_s}|^2=\sum_{i=1}^n\,(\k_s)_i^2
  \]
  as natural generalizations of the mean curvature and of the length of the second fundamental form of $\pa\Omega_s$ with respect to $\nu_{\Om_s}$ at points in $\Gamma_s^+\subset\pa\Om_s$.

  \medskip

  \noindent (iii) For every $r<s<t$, the map $g_r:\Gamma_s^t\to\Gamma_{s-r}^t$, defined by $g(y)=y-r\,\nabla u(y)$ for $y\in\Gamma_s^t$, is a Lipschitz bijection from $\Gamma_s^t$ to $\Gamma_{s-r}^t$, with
  \begin{equation}
    \label{midterms}
      J^{\Gamma_s^t}g_r(y)=\prod_{i=1}^n\big(1+r(\k_s)_i(y)\big)\qquad (\k_{s-r})_i(g_r(y))=\frac{(\k_s)_i(y)}{1+r\,(\k_s)_i(y)}
  \end{equation}
  for $\H^n$-a.e. $y\in\Gamma_s^t$.
\end{lemma}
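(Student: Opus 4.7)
The plan is simple: essentially all the assertions of this lemma were already established inside steps one, two, and three of the proof of Theorem \ref{thm main}, and those steps never invoked the critical point or constant mean curvature hypothesis. So the proof consists of isolating and slightly repackaging those arguments. Openness of $\Om_s$ is immediate from continuity of the $1$-Lipschitz function $u$, and finite perimeter of $\Om_s$ for a.e. $s>0$ follows from the BV coarea formula $\int_0^\infty P(\{u>s\})\,ds = \int_\Om|\nabla u|\,dy \le |\Om|$.

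For (i), I would fix $s<t$ and recycle step one of the proof of Theorem \ref{thm main}. The interior and exterior ball conditions on $\Gamma_s^t$ give \eqref{C1 ext}, Whitney's extension theorem produces a $C^1$-extension of $(u,N)$ to $\R^{n+1}$, and the local Lipschitz estimate on $N$ along the compact pieces $\U_j$ codified in \eqref{lip bound j} upgrades this to $C^{1,1}$; the $C^{1,1}$ implicit function theorem then yields the $C^{1,1}$-graph structure locally on $\Gamma_s^t$. Next, $\Gamma_s^t$ is monotone decreasing in $t$: if $y\in\Gamma_s^{t_2}$ and $s<t_1<t_2$, set $x=y-sN(y)\in\pa\Om$ and $z_1=y+(t_1-s)N(y)\in\pa\Om_{t_1}$, the latter by \eqref{important}, and a direct computation gives $y=(1-s/t_1)x+(s/t_1)z_1$, so $y\in\Gamma_s^{t_1}$. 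Thus $\Gamma_s^+=\bigcup_{t\in\Q\cap(s,\infty)}\Gamma_s^t$ inherits the $C^{1,1}$-rectifiability.

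The identity $\H^n(\pa\Om_s\setminus\Gamma_s^+)=0$ is precisely \eqref{cool} of step two, whose proof only used the projection estimate \eqref{proj pp}---a consequence of the Jacobian bound coming from (i)---and the coarea formula for $u$, both independent of any critical point assumption. Finiteness of $\H^n(\pa\Om_s)$ for a.e. $s$, needed to pass from equality of measures to negligibility of the difference, follows from $P(\Om_s)<\infty$ combined with \eqref{omegas meno omegas}. For (ii), since $\Gamma_s^{t_2}\subset\Gamma_s^{t_1}$ for $s<t_1<t_2$, the locality statements \eqref{locality tangent plane} and \eqref{locality diff} force $\nabla^{\Gamma_s^{t_1}}N=\nabla^{\Gamma_s^{t_2}}N$ at $\H^n$-a.e. point of $\Gamma_s^{t_2}$, so the eigenvalues match and $(\k_s)_i$ is unambiguously defined on $\Gamma_s^+$. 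Finally (iii) is a direct restatement of step three: the curvature formula in \eqref{midterms} is \eqref{formula curvatures}, and the Jacobian formula is $J^{\Gamma_s^t}g_r=\det(\Id_{T_y\Gamma_s^t}-r\,\nabla^{\Gamma_s^t}N)=\prod_{i=1}^n(1+r(\k_s^t)_i)$ by the diagonalization \eqref{kst}.

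The main obstacle is not technical but rather bookkeeping: one must verify carefully that nothing in steps one through three silently used the constant mean curvature assumption. Inspection shows that hypothesis first enters in step four (through Sch\"atzle's strong maximum principle) and in the Montiel-Ros equality argument, both of which are absent here, so the extraction goes through cleanly.
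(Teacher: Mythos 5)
Your proposal is correct and is essentially the paper's own proof: the paper disposes of this lemma by observing that all the claims are contained in steps one, two and three of the proof of Theorem \ref{thm main}, where neither the constant distributional mean curvature condition nor the regularity of $\pa^*\Om$ is ever used. Your additional fill-ins (openness and finite perimeter of $\Om_s$ via the coarea formula, monotonicity of $\Gamma_s^t$ in $t$, and the locality argument making $(\k_s)_i$ well defined on overlaps $\Gamma_s^{t_1}\cap\Gamma_s^{t_2}$) are accurate and consistent with that extraction.
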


\begin{proof}
  All these conclusions are contained in step one, two and three of the proof of Theorem \ref{thm main}, where at no stage the constant distributional mean curvature condition, or the regularity of $\pa^*\Om$ implied by it, have been used.
\end{proof}

  As a consequence of Lemma \ref{lemma sofps}, we see that for every $x\in g_s(\Gamma_s^+)\subset\pa\Om$, the limit
  \begin{equation}
    \label{ki}
    \k_i(x)=\lim_{r\to s^-}(\k_{s-r})_i(x)\in[-\infty,\infty)
  \end{equation}
  exists by monotonicity, see \eqref{midterms}. We thus give the following definitions: given an open set of finite perimeter and finite volume $\Om\subset\R^{n+1}$ we define the {\it viscosity boundary of $\Om$} as
  \[
  \pa^v\Om=\bigcup_{s>0} g_s(\Gamma_s^+)
  \]
  and the {\it viscosity mean curvature of $\Om$} by
  \begin{equation}
    \label{homega visco}
  H_\Om^v(x)=\sum_{i=1}^n\k_i(x)\qquad\forall x\in\pa^v\Om\,.
  \end{equation}
  Notice that $\pa^v\Om$ is covered by countably many $\H^n$-rectifiable sets, although it may contain points of $\spt\mu_\Om$ that are outside the reduced boundary, or that have density $1$ for $\Om$. It is not obvious if, at this level of generality, $\pa^v\Om$ is $\H^n$-finite. In any case, our only reason for introducing these concepts is to formulate the following definition: a set of finite perimeter and finite volume $\Om$ is {\it mean convex in the viscosity sense} if $H_\Om^v$ defined in \eqref{homega visco} is positive along $\pa^v\Om$. It is easy to see that if $\partial\Omega$ is $C^2$, then $\pa^v\Om=\pa\Om$ and $H_\Om^v(x)=H_\Om(x)$ for any $x\in \pa\Om$. Hence, the viscosity notion generalizes the mean convexity in the classical sense.

  This said, following Brendle's point of view on the Montiel-Ros argument \cite{brendle}, we have the following generalized form of the Heintze-Karcher inequality, see \eqref{hk visco} below.

  \begin{theorem}[Heintze-Karcher inequality for sets of finite perimeter]
    \label{thm viscous hk}
    If $\Om\subset\R^{n+1}$ is an open set of finite perimeter and finite volume which is mean convex in the viscosity sense, then for every $s>0$
    \begin{equation}
      \label{hk visco}
      |\Om_s|\le\frac{n}{n+1}\int_{\Gamma_s^+}\frac{d\H^n}{H_{\Om_s}}\,.
    \end{equation}
    Moreover, the limit of the right-hand side of \eqref{hk visco} as $s\to 0^+$ always exists in $(0,\infty]$.
  \end{theorem}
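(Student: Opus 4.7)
My plan is to adapt the Montiel--Ros argument from the proof of Theorem~\ref{thm main} to $\Om_s$ in place of $\Om$, exploiting the $C^{1,1}$-rectifiability of $\Gamma_s^+$ and the pointwise bounds on the principal curvatures $(\k_s)_i$ provided by Lemma~\ref{lemma sofps}. A key simplification over the original setting is that $\Om_s$ admits an exterior ball of radius $s$ at every point of $\pa\Om_s$ (just take $B_s(x)$ for $x\in\pa\Om$ nearest to a given boundary point of $\Om_s$), so the covering of $\Om_s$ by inward normal rays from $\Gamma_s^+$ becomes essentially automatic and the delicate step four of the proof of Theorem~\ref{thm main} is bypassed.

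First I would establish the pointwise covering of $\Om_s$. Given $w\in\Om_s$, pick a closest point $x\in\pa\Om$ to $w$, set $v:=u(w)-s=\dist(w,\pa\Om_s)$, and $z:=x+(s/u(w))(w-x)$. Then $z\in\pa\Om_s$ with $|z-w|=v$, and by definition~\eqref{gammst} applied with $x\in\pa\Om$, $w\in\pa\Om_{u(w)}$, one has $z\in\Gamma_s^{u(w)}\subset\Gamma_s^+$. The distance function is differentiable at $z$ with $N(z)=(w-x)/u(w)$, so $w=z+v\,N(z)$, while the upper bound $(\k_s^t)_n\le 1/(t-s)$ from step one of the proof of Theorem~\ref{thm main}, reused in Lemma~\ref{lemma sofps}(ii), gives $(\k_s)_n(z)\le 1/v$ wherever it is defined. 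Hence $w\in\zeta_s(Z_s)$ for $\zeta_s(y,t):=y+t\,N(y)$ and $Z_s:=\{(y,t)\in\Gamma_s^+\times(0,\infty):t\le 1/(\k_s)_n(y)\}$, while the set of $w$ whose projection lands in the $\H^n$-null exceptional part of $\Gamma_s^+$ is a Lipschitz image of an $\H^n$-null set and hence Lebesgue-null. The tangential Jacobian of $\zeta_s$ along $Z_s$ is $J^{Z_s}\zeta_s=\prod_i(1-t(\k_s)_i)\ge 0$, and by the area formula, Fubini, the arithmetic--geometric mean inequality, and $(\k_s)_n\ge H_{\Om_s}/n$ (coming from $H_{\Om_s}>0$),
\[
|\Om_s|\le\int_{\Gamma_s^+}d\H^n_y\int_0^{1/(\k_s)_n(y)}\prod_i(1-t(\k_s)_i(y))\,dt\le \frac{n}{n+1}\int_{\Gamma_s^+}\frac{d\H^n}{H_{\Om_s}},
\]
which is \eqref{hk visco}.

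For the existence of the limit, set $F(s):=\int_{\Gamma_s^+}d\H^n/H_{\Om_s}$ and show $F$ is non-increasing on $(0,\infty)$. For $0<s_1<s_2$ and $r:=s_2-s_1$, Lemma~\ref{lemma sofps}(iii) gives that $g_r:\Gamma_{s_2}^+\to\Gamma_{s_1}^+$ is a Lipschitz injection with tangential Jacobian $\prod_i(1+r(\k_{s_2})_i)$ and $(\k_{s_1})_i\circ g_r=(\k_{s_2})_i/(1+r(\k_{s_2})_i)$. Setting $a_i:=(\k_{s_2})_i(y)$, $P(r):=\prod_i(1+ra_i)$, $Q(r):=\sum_i a_i/(1+ra_i)$, the change of variables together with positivity of the integrand on the extra mass $\Gamma_{s_1}^+\setminus g_r(\Gamma_{s_2}^+)$ yield
\[
F(s_1)\ge \int_{\Gamma_{s_2}^+}\frac{P(r)}{Q(r)}\,d\H^n.
\]
A direct computation shows $P'(r)=P(r)\,Q(r)$ and $Q'(r)=-\sum_i a_i^2/(1+ra_i)^2\le 0$, so $(P/Q)'(r)=P(r)\bigl(1-Q'(r)/Q(r)^2\bigr)\ge P(r)>0$; consequently $P(r)/Q(r)\ge P(0)/Q(0)=1/\sum_i a_i=1/H_{\Om_{s_2}}(y)$ pointwise, whence $F(s_1)\ge F(s_2)$. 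Therefore $F$ is non-increasing on $(0,\infty)$ and the limit exists in $(0,\infty]$; the strict positivity follows from $F(s)\ge (n+1)|\Om_s|/n\to (n+1)|\Om|/n>0$ as $s\to 0^+$.

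The main obstacle is ensuring that $Q(r)>0$ throughout the range $r\in[0,s_2-s_1]$, i.e.\ that $H_{\Om_{s_2-r}}\circ g_r>0$, so that the ratio $P/Q$ is well defined and the monotonicity argument applies. The viscosity mean-convexity assumption gives this directly in the limit $r\to s_2^-$; it propagates to all intermediate $r$ painlessly when the $(\k_s)_i$ are nonnegative, since then $H_{\Om_{s-r}}\circ g_r$ is monotone in $r$ by the very same formulas, while the mixed-sign case requires a finer analysis using the pointwise bound $(\k_s)_i\ge-1/s$ from the exterior ball condition to exclude that $H_{\Om_{s-r}}$ vanishes somewhere in the interior of the offset family.
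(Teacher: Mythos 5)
Your route to \eqref{hk visco} is genuinely different from the paper's. You run the Montiel--Ros covering argument directly on $\Om_s$: since $\Gamma_s^+$ consists, by definition, exactly of the points of $\pa\Om_s$ reached by inward normal segments, every $w\in\Om_s$ has a foot point $z\in\Gamma_s^{u(w)}$, and the delicate singular-set analysis of step four of Theorem \ref{thm main} (which relies on Sch\"atzle's maximum principle, hence on a distributional mean curvature hypothesis not available here) is indeed bypassed. The paper avoids the covering altogether: it differentiates $Q^t(s)=\int_{\Gamma_s^t}H_{\Om_s}^{-1}\,d\H^n$ in $s$, uses Cauchy--Schwarz to get $(Q^t)'(s)\le-\frac{n+1}{n}\H^n(\Gamma_s^t)$, and integrates this differential inequality against the coarea formula \eqref{coarea}, so that both \eqref{hk visco} and the monotonicity of your $F$ (hence the existence of the limit) come out of one computation. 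Your monotonicity argument via $P'=PQ$, $Q'\le0$ is in fact slicker than the paper's for that step; the paper's route buys a cleaner handling of the measure-theoretic technicalities and a quantitative decay rate for $Q$.

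Two points must be addressed. First, what you flag as ``the main obstacle'' is not one, and you cannot leave it open, because the positivity of $H_{\Om_s}$ on $\Gamma_s^+$ is already needed in your first part (to replace the upper limit $1/(\k_s)_n$ by $n/H_{\Om_s}$ in the AM--GM step, and to make $1/H_{\Om_s}$ meaningful). The resolution is one line and is exactly the computation you already made: for a.e.\ $y\in\Gamma_s^+$ and $a_i=(\k_s)_i(y)$, each $r\mapsto a_i/(1+ra_i)$ has derivative $-a_i^2/(1+ra_i)^2\le0$ regardless of the sign of $a_i$, so by \eqref{midterms} and \eqref{ki} one gets $H_{\Om_{s-r}}(g_r(y))\ge H^v_\Om(g_s(y))>0$ for all $r\in[0,s)$, the last inequality being the viscosity mean convexity at the foot point $g_s(y)\in\pa^v\Om$; no ``finer analysis'' of the mixed-sign case is required (this is the one-line observation with which the paper opens its proof). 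Second, your disposal of the exceptional set is too quick: the bound $(\k_s)_n\le 1/(t-s)$ in \eqref{kst} holds only $\H^n$-a.e.\ on $\Gamma_s^t$, with a null set a priori depending on $t=u(w)$, and $N$ is Lipschitz only on the pieces $\U_j$ covering $\Gamma_s^t$ up to an $\H^n$-null set, so ``Lipschitz image of an $\H^n$-null set'' is not literally available. Both defects are repairable with tools already in the paper: use the nestedness $\Gamma_s^{t'}\subset\Gamma_s^t$ for $t'>t$ and rational values of $t$ to produce a single $\H^n$-null set $E\subset\Gamma_s^+$ outside of which $(\k_s)_n(z)\le1/(t-s)$ holds for every admissible $t$; then slice the bad set $\{z+\tau N(z):z\in E\}$ by the coarea formula \eqref{coarea}, noting that for a.e.\ $\tau$ its slice is contained, up to the $\H^n$-null set $\pa\Om_{s+\tau}\setminus\Gamma_{s+\tau}^+$ of Lemma \ref{lemma sofps}, in $g_\tau^{-1}(E)\cap\Gamma_{s+\tau}^+$, which is $\H^n$-null by the area formula since $g_\tau$ is injective with tangential Jacobian bounded below by $(s/(s+\tau))^n>0$. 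With these repairs your proof is correct.
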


  \begin{proof} The mean convexity assumption on $\Om$ and the monotonicity property behind the definition \eqref{ki} of $\k_i$, imply that $\sum_{i=1}^n(\k_s)_i>0$ on $\Gamma_s^+$. We define for every $s>0$
  \[
  Q(s)=\int_{\Gamma_s^+}\,\frac{d\H^n}{H_{\Om_s}}>0\,.
  \]
  Moreover, for every $t>0$ we define $Q^t:(0,t)\to(0,\infty)$ by setting
  \[
  Q^t(s)=\int_{\Gamma_s^t}\frac{d\H^n}{H_{\Om_s}}\,,\qquad s\in(0,t)\,.
  \]
  Notice that
  \begin{equation}
    \label{notice thatt}
      Q(s)\ge Q^t(s)\ge Q^{t+\e}(s)\qquad\forall t>s\,,\e>0\,,
  \end{equation}
  and recall that $\H^n(\Gamma_s^t)$ converges monotonically to $\H^n(\Gamma_s^+)$ as $t\to s^+$, so that
  \begin{equation}
    \label{qts approx}
      Q(s)=\lim_{t\to s^+}Q^t(s)=\sup_{t>s}Q^t(s)\,,\qquad\mbox{for every $s>0$}\,.
  \end{equation}
  For $r\in(0,s)$ by Lemma \ref{lemma sofps}--(iii) we have
  \begin{eqnarray*}
  Q^t(s-r)-Q^t(s)&=&
  \int_{\Gamma_s^t}\Big(\frac{\prod_{i=1}^n(1+r\,(\k_s)_i)}{\sum_{i=1}^n(\k_s)_i/(1+r(\k_s)_i)}-\frac1{H_{\Om_s}}\Big)\,d\H^n
  \\
  &=&
  \int_{\Gamma_s^t}\Big(\frac{1+r\,H_{\Om_s}+O_t(r^2)}{H_{\Om_s}-r\,|A_{\Om_s}|^2+O_t(r^2)}-\frac1{H_{\Om_s}}\Big)\,d\H^n
  \end{eqnarray*}
  where $O_t(r^2)/r\to 0$ uniformly on $\Gamma_s^t$ as $r\to 0$. We thus find that $Q^t$ is differentiable on $(0,t)$ with
  \[
  (Q^t)'(s)=-\int_{\Gamma_s^t}1+\frac{|A_{\Om_s}|^2}{H_{\Om_s}^2}\,d\H^n\,,\qquad\forall s\in(0,t)\,.
  \]
  By the Cauchy-Schwartz inequality, $H_{\Om_s}^2\le n\,|A_{\Om_s}|^2$. Hence,
  \begin{equation}
    \label{integ}
      (Q^t)'(s)\le -\frac{n+1}n\,\H^n(\Gamma_s^t)\,,\qquad\forall s\in(0,t)\,.
  \end{equation}
  If $0<s_1<s_2$, then by \eqref{qts approx}, \eqref{notice thatt} and \eqref{integ} respectively, we have
  \begin{eqnarray}\nonumber
    Q(s_1)-Q(s_2)&=&\lim_{\e\to0^+}Q^{s_1+\e}(s_1)-Q^{s_2+\e}(s_2)
    \\\nonumber
    &\ge&    \lim_{\e\to0^+}Q^{s_2+\e}(s_1)-Q^{s_2+\e}(s_2)=Q^{s_2}(s_1)-Q^{s_2}(s_2)
    \\\label{coming back}
    &\ge&\frac{n+1}n\,\int_{s_1}^{s_2}\H^n(\Gamma_s^{s_2})\,ds\,,
  \end{eqnarray}
  and, in particular, $Q$ is decreasing on $(0,\infty)$. Again by Lemma \ref{lemma sofps}--(iii)
  \[
  \H^n(\Gamma^t_{s-r})=\int_{\Gamma^t_s}\prod_{i=1}^n(1+r(\k_i)_s)\,d\H^n
  \]
  where $1+r(\k_i)_s\to 1$ uniformly on $\Gamma_s^t$ as $r\to 0$ thanks to $1/(t-s)\ge(\k_s)_i\ge-1/s$ for every $i=1,...,n$. Thus $\H^n(\Gamma_s^t)$ is continuous on $s\in(0,t)$, and
  \[
  \int_{s_1}^{s_2}\H^n(\Gamma_s^{s_2})\,ds=(s_2-s_1)\,\H^n(\Gamma_{s^*}^{s_2})
  \]
  for a suitable $s_*\in(s_1,s_2)$. But \eqref{proj pp} implies
  \[
  \liminf_{s\to (s_2)^-}\H^n(\Gamma_s^{s_2})\ge\H^n(\pa\Om_{s_2})\,,
  \]
  so that, in conclusion,
  \[
  \liminf_{s_1\to (s_2)^-}\frac1{s_2-s_1}\int_{s_1}^{s_2}\H^n(\Gamma_s^{s_2})\,ds\ge\H^n(\pa\Om_{s_2})\,,\qquad\forall s_2>0\,.
  \]
  Coming back to \eqref{coming back}, and noticing that $Q'(s)$ exists for a.e. $s>0$ by monotonicity, we conclude that
  \[
  -Q'(s)\ge\frac{n+1}n\,\H^n(\pa\Om_s)\qquad\mbox{for a.e. $s>0$}\,.
  \]
  We integrate this inequality over $(s,\infty)$ to complete the proof of \eqref{hk visco}.
\end{proof}

\bibliography{references}
\bibliographystyle{is-alpha}

\end{document}